\DeclareFontFamily{U}{mathc}{}
\DeclareFontShape{U}{mathc}{m}{it}%
{<->s*[1.03] mathc10}{}
\DeclareMathAlphabet{\mathscr}{U}{mathc}{m}{it}
\numberwithin{equation}{section} 
\newtheorem{theorem}{Theorem}[section]
\newtheorem{lemma}[theorem]{Lemma}
\newtheorem{proposition}[theorem]{Proposition}
\newtheorem{corollary}[theorem]{Corollary}
\newcommand{\At}{\operatorname{At}}
\newcommand{\Ad}{\operatorname{Ad}}
\newcommand{\Ext}{\operatorname{Ext}}
\theoremstyle{definition}
\newtheorem{definition}[theorem]{Definition}
\newtheorem{example}[theorem]{Example}
\newtheorem*{ackno}{Acknowledgements}
\theoremstyle{remark}
\newtheorem{remark}[theorem]{Remark}
\newcommand{\sE}{\mathcal{E}}
\newcommand{\sJ}{\mathcal{J}}
\newcommand{\sF}{\mathcal{F}}
\newcommand{\sQ}{\mathcal{Q}}
\newcommand{\Oh}{\mathcal{O}}
\newcommand{\sU}{\mathcal{U}}
\newcommand{\sD}{\mathcal{D}}
\newcommand{\K}{\mathbb{K}}
\newcommand{\N}{\mathbb{N}}
\newcommand{\Gau}{\operatorname{Gauge}}
\newcommand{\Sad}{\mathscr{ad}(P)}
\newcommand{\LL}{\mathcal{L}}
\renewcommand{\bar}[1]{\overline{#1}}
\newcommand{\Art}{\mathbf{Art}}
\newcommand{\Set}{\mathbf{Set}}
\newcommand{\dr}{d_{dR}}
\newcommand{\Def}{\operatorname{Def}}
\newcommand{\Tot}{\operatorname{Tot}}
\newcommand{\Ker}{\operatorname{Ker}}
\newcommand{\Id}{\operatorname{Id}}
\newcommand{\ad}{\operatorname{ad}}
\newcommand{\Tr}{\operatorname{Tr}}
\newcommand{\g}{\mathfrak{g}}
\newcommand{\la}{\langle}
\newcommand{\ra}{\rangle}
\newcommand{\na}{\nabla}
\newcommand{\Spec}{\operatorname{Spec}}
\newcommand{\DER}{{{\mathcal D}er}}
\newcommand{\HOM}{{{\mathcal H}om}}
\newcommand{\m}{\mathfrak{m}}
\title{Cyclic forms on DG-Lie algebroids and semiregularity}
\author{Emma Lepri}
\address{\newline
	Universit\`a degli studi di Roma La Sapienza,\hfill\newline
	Dipartimento di Matematica  Guido
	Castelnuovo,\hfill\newline
	P.le Aldo Moro 5,
	I-00185 Roma, Italy.\medskip}
\email{emma.lepri@uniroma1.it}
\begin{document}
	
	\maketitle
	
	\begin{abstract} \begin{sloppypar} Given a transitive DG-Lie algebroid $(\mathcal{A}, \rho)$ over a smooth separated scheme $X$ of finite type over a field $\mathbb{K}$ of characteristic 0 we define a notion of connection ${\nabla \colon \mathbf{R}\Gamma(X,\mathrm{Ker} \rho) \to \mathbf{R}\Gamma (X,\Omega_X^1[-1]\otimes \mathrm{Ker} \rho)}$ 
	 and construct an $L_\infty$ morphism between DG-Lie algebras $f \colon \mathbf{R}\Gamma(X, \mathrm{Ker} \rho) \rightsquigarrow\mathbf{R}\Gamma(X, \Omega_X^{\leq 1} [2])$
		associated to a connection and to a cyclic form on the DG-Lie algebroid.
				In this way, we obtain a lifting of the first component of the modified Buchweitz-Flenner semiregularity map in the algebraic context, which has an application to the deformation theory of coherent sheaves on $X$ admitting a finite locally free resolution.
				Another application is to the deformations of (Zariski) principal bundles on $X$.
				\end{sloppypar}
	\end{abstract}

\section*{Introduction}

Let $\sF$ be a coherent sheaf admitting a finite locally free resolution on a smooth variety $X$ over a field $\K$ of characteristic zero.
The Buchweitz-Flenner semiregularity map, introduced in \cite{BF}, and generalising the  semiregularity map of Bloch \cite{Blo}, is  
defined by the formula:
\begin{equation}\label{eq.BF}
\sigma\colon \Ext^2_X(\sF,\sF)\to \prod_{q\ge 0}H^{q+2}(X,\Omega_X^q),\qquad 
\sigma(c)=\Tr(\exp(-\At(\sF))\circ c), 
\end{equation} where $\Tr$ denotes the trace maps
 $\Tr\colon \Ext_X^i(\sF,\sF\otimes \Omega^j_X)\to H^i(X,\Omega^j_X)$ for  $i,j\geq0$, and the exponential of the opposite of the Atiyah class $\At(\sF) \in \Ext_X^1(\sF, \sF \otimes \Omega_X^1)$ is defined via the  Yoneda pairing \[ \Ext^i_X(\sF,\sF\otimes \Omega^i_X)\times \Ext^j_X(\sF,\sF\otimes \Omega^j_X)\to 
\Ext^{i+j}_X(\sF,\sF\otimes \Omega^{i+j}_X),\qquad (a,b)\mapsto a\circ b,\] 
\[ \exp(-\At(\sF))\in \prod_{q\ge 0}\Ext^q_X(\sF,\sF\otimes \Omega^q_X). \]

We refer to \cite{linfsemireg} for a discussion of the role of the Buchweitz-Flenner semiregularity map in deformation theory and of the reason it is more convenient in this setting to consider the modified Buchweitz-Flenner semiregularity map, obtained as follows. 
Denote by $\sigma_q \colon \Ext_X^2(\sF, \sF) \to H^{q+2}(X, \Omega_X^q)$  the components of the semiregularity map $\sigma = \sum \sigma_q$,
for every $q \geq 0$ denote by $\Omega^{\le q}_X=(\oplus_{i=0}^q\Omega_X^i[-i],\dr)$ the truncated de Rham complex, and  consider the composition 
\[  \tau_q\colon \Ext_X^2(\sF,\sF)\xrightarrow{\sigma_q}H^{q+2}(X,\Omega_X^q)=H^{2}(X,\Omega_X^q[q])\xrightarrow{i_q} \mathbb{H}^{2}(X,\Omega_X^{\le q}[2q]),\]
where the map $i_q$ is induced by the inclusion of complexes $\Omega_X^{q}[q]\subset \Omega_X^{\le q}[2q]$. The map $\tau_q$ is the $q$-component of the modified Buchweitz-Flenner semiregularity map.

A lifting to an $L_\infty$ morphism of the first component $\tau_1$ 
 of the modified Buchweitz-Flenner semiregularity map was constructed in \cite{linfsemireg} in the context of complex manifolds: for every 
connection of type $(1,0)$ on a
finite complex of locally free sheaves $\sE$ on a complex manifold $X$ the existence of an $L_{\infty}$ morphism between DG-Lie algebras \[ g\colon A^{0, *}_X(\HOM^*_{\Oh_X}(\sE,\sE))\rightsquigarrow \dfrac{A^{*,*}_X}{A^{\ge 2,*}_X}[2]\] 
whose linear component induces $\tau_1$ in cohomology was proved. 
As a consequence, recalling that $\Ext_X^2(\sF, \sF)$ is the obstruction space for the functor of deformations of a coherent sheaf $\sF$, the map $\tau_1$ annihilates all obstructions to deformations of a coherent sheaf admitting a finite locally free resolution. We refer again to \cite{linfsemireg} for a survey on the existing literature in this regard; we only note here that this fact was proved by Mukai and Artamkin \cite{Arta} for the $0$th component of the Buchweitz-Flenner semiregularity map, in \cite{semireg} with some mild assumptions for the Bloch semiregularity map, in \cite{BF} for curvilinear obstructions for the map $\sigma$ when the Hodge to de Rham spectral sequence of $X$ degenerates at $E_1$, and finally in the general case for all obstructions and for the maps $\tau_q$ in \cite{Pri}.

The fact that the construction of the $L_\infty$ morphism  can also be realised  in the algebraic case was outlined in \cite[Section 5]{linfsemireg} and is expanded on here: given a simplicial connection on the finite complex of locally free sheaves $\sE$, the map $\tau_1$ can be lifted to an $L_\infty$ morphism 
	\[g\colon \Tot(\mathcal{U},\HOM^*_{\Oh_X} (\sE,\sE)) \rightsquigarrow \Tot(\mathcal{U}, \Omega_X^{\leq 1} [2]),\]
	where $\Tot$ denotes the Thom-Whitney totalisation and $\sU$ is an affine open cover of $X$.
	
Since $\HOM_{\Oh_X}^*(\sE,\sE)$ is the kernel of the anchor map of the transitive DG-Lie algebroid of derivations of pairs $\sD^*(X,\sE)$ of \cite{DMcoppie}, it is natural to generalise this construction to the framework of DG-Lie algebroids. In fact, the main result of this paper, Theorem~\ref{teo.linfinitoalg}, is the construction  of an $L_\infty$ morphism between DG-Lie algebras 
\[ f \colon \Tot(\sU, \LL) \rightsquigarrow \Tot(\sU, \Omega_X^{\leq 1}[2]),\]
where $\LL$ denotes the kernel of the anchor map of a transitive DG-Lie algebroid,  which is equipped 
with a connection $\na \colon \Tot(\sU, \LL) \to \Tot(\sU, \Omega^1_X[-1]\otimes \LL)$ and a $d_{\Tot}$-closed cyclic form. 

For this reason, in Section~\ref{sec.DGLiealg} we introduce connections on transitive DG-Lie algebroids, which are $\K$-linear operators that play the role of connections of type $(1,0)$ in the construction of the $L_\infty$ morphism. Connections on transitive DG-Lie algebroids are associated to
simplicial liftings of the identity; also associated to a simplicial lifting of the identity is the extension cocycle, which generalises the notion of Atiyah cocycle.
Section~\ref{sec.cyclic} describes cyclic forms on DG-Lie algebroids, cyclic forms induced by DG-Lie algebroid representations and the construction of the $L_\infty$ morphism.
Also contained in Section~\ref{sec.cyclic} is the following application to the deformation theory of coherent sheaves, analogous to the one  obtained for complex manifolds in \cite{linfsemireg}:

\begin{theorem}[=Corollary~\ref{cor.fasci2}]
		Let $\sF$ be a coherent sheaf admitting a finite locally free resolution on a smooth separated scheme $X$ of finite type over a field $\K$ of characteristic zero. Then every obstruction to the deformations of $\sF$ belongs to the kernel of the map
	\[  \tau_1\colon \Ext_X^2(\sF,\sF)\to \mathbb{H}^{2}(X,\Omega_X^{\le 1}[2]).\]
	{If the Hodge to de Rham spectral sequence of $X$ degenerates at $E_1$, then  
		every obstruction to the deformations of $\sF$ belongs to the kernel of the map
		\[  \sigma_1\colon \Ext_X^2(\sF,\sF)\to {H}^{3}(X,\Omega_X^{1}),\qquad \sigma_1(a)=-\Tr(\At(\sF)\circ a).\]}
\end{theorem}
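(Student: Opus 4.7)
The plan is to apply Theorem~\ref{teo.linfinitoalg} to the transitive DG-Lie algebroid of derivations of pairs $\sD^*(X,\sE)$ of \cite{DMcoppie}, where $\sE\to\sF$ is a fixed finite locally free resolution. The kernel of the anchor is $\LL=\HOM^*_{\Oh_X}(\sE,\sE)$, and it is standard that the DG-Lie algebra $\Tot(\sU,\HOM^*_{\Oh_X}(\sE,\sE))$ controls deformations of $\sF$; in particular its second cohomology is identified with $\Ext_X^2(\sF,\sF)$ and contains every obstruction to $\Def_\sF$.

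First I would supply the extra data required by Theorem~\ref{teo.linfinitoalg}. Since each $\sE^i$ is locally free, on every affine open $U\in\sU$ the anchor $\rho$ admits a $\K$-linear splitting, and these assemble into a simplicial lifting of the identity producing a connection $\na\colon\Tot(\sU,\LL)\to \Tot(\sU,\Omega_X^1[-1]\otimes\LL)$. The graded trace on $\HOM^*_{\Oh_X}(\sE,\sE)$ is $\Oh_X$-linear, cyclically symmetric in the graded sense, and compatible with the differentials, hence a $d_{\Tot}$-closed cyclic form. Plugging these into Theorem~\ref{teo.linfinitoalg} produces the $L_\infty$ morphism
\[ f\colon \Tot(\sU,\HOM^*_{\Oh_X}(\sE,\sE)) \rightsquigarrow \Tot(\sU,\Omega_X^{\leq 1}[2]). \]

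The main step is to recognise the induced map $H^2(f_1)$ as exactly $\tau_1$. By the construction of $f$, the linear component pairs a class $c$ with the trace of the extension cocycle of the chosen simplicial lifting, and the task reduces to identifying this cocycle with a Thom-Whitney representative of the Atiyah class $\At(\sF)$, so that Yoneda composition followed by the trace yields $i_1(-\Tr(\At(\sF)\circ c))$. This verification is the main technical obstacle and is the algebraic counterpart of the Dolbeault computation carried out in \cite{linfsemireg}.

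For the conclusion, $\Omega_X^{\leq 1}[2]$ is regarded as a DG-Lie algebra with zero bracket, so $\Def$ of $\Tot(\sU,\Omega_X^{\leq 1}[2])$ is smooth; since $L_\infty$ morphisms of DG-Lie algebras transport obstructions to obstructions, every obstruction of $\Def_\sF$ is annihilated by $H^2(f_1)=\tau_1$, giving the first statement. For the second, the short exact sequence $0\to\Omega_X^1[1]\to\Omega_X^{\leq 1}[2]\to\Oh_X[2]\to 0$ yields
\[ H^3(X,\Oh_X)\xrightarrow{\dr} H^3(X,\Omega_X^1)\xrightarrow{i_1}\mathbb{H}^2(X,\Omega_X^{\leq 1}[2])\to H^4(X,\Oh_X), \]
and the first arrow vanishes under $E_1$-degeneration of the Hodge to de Rham spectral sequence, so $i_1$ is injective and $\Ker\tau_1=\Ker\sigma_1$ in that case.
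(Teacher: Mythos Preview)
Your proposal is correct and follows essentially the same route as the paper: apply Theorem~\ref{teo.linfinitoalg} to the algebroid $\sD^*(X,\sE)$ with the trace form (this is packaged in the paper as Corollaries~\ref{cor.linfBF} and~\ref{cor.fasci1}), then use that the target is abelian to kill obstructions, and finally use $E_1$-degeneration to make $i_1$ injective. The identification of $H^2(f_1)$ with $\tau_1$ that you flag as the main technical point is exactly the content of Corollary~\ref{cor.fasci1}, which the paper likewise defers to~\cite{linfsemireg}.
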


Lastly, since to every principal bundle one can naturally associate the Atiyah Lie algebroid of \cite{At}, Section~\ref{sec.fibratiprinc} contains the following application to the deformation theory of (Zariski) principal bundles. 
	\begin{theorem}[=Corollary~\ref{cor.principali}]
		Let $P$ be a principal bundle on a smooth separated scheme $X$ of finite type over an algebraically closed field $\K$ of characteristic zero and let $$\la-,-\ra \colon \Tot(\mathcal{U}, \Omega^i_X[-i] \otimes \Sad) \times \Tot(\mathcal{U}, \Omega^j_X[-j] \otimes \Sad) \to \Tot(\mathcal{U}, \Omega^{i+j}_X[-i-j] ),\quad i,j \geq 0,$$ be a $d_{\Tot}$-closed cyclic  form.
		Then every obstruction to the deformations of $P$ belongs to the kernel of the map
		\begin{equation*}
		f_1 \colon H^2 (X, \Sad) \to \mathbb{H}^2 (X, \Omega_X^{\leq 1}[2]), \quad f_1(x)= \la \At(P), x \ra,
		\end{equation*}
		where $\At(P)$ denotes the Atiyah class of the principal bundle $P$.
\end{theorem}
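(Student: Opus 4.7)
The plan is to instantiate the main construction of Theorem~\ref{teo.linfinitoalg} at the Atiyah Lie algebroid of $P$ and then invoke the standard obstruction-preserving property of $L_\infty$ morphisms together with the abelianness of the target. Recall from \cite{At} that $\At(P)$ is a transitive Lie algebroid on $X$ fitting into a short exact sequence $0\to\Sad\to\At(P)\to T_X\to 0$, so viewed as a transitive DG-Lie algebroid concentrated in degree zero it satisfies the hypotheses of Theorem~\ref{teo.linfinitoalg} with $\LL=\Sad$. Choosing an affine open cover $\sU$ of $X$ together with a simplicial lifting of the identity (as in Section~\ref{sec.DGLiealg}) equips $\Sad$ with a connection $\na$, and combined with the given $d_{\Tot}$-closed cyclic form the theorem produces an $L_\infty$ morphism
\[f\colon \Tot(\sU,\Sad)\rightsquigarrow \Tot(\sU,\Omega_X^{\leq 1}[2]).\]

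Next I would recall that the deformation theory of the Zariski principal bundle $P$ is controlled by the DG-Lie algebra $\Tot(\sU,\Sad)$: its first cohomology classifies infinitesimal deformations and its second cohomology is the receptacle for obstructions (the standard \v Cech-theoretic description of deformations of a principal bundle on an affine cover, translated into the Thom--Whitney totalisation). Consequently every obstruction to deformations of $P$ is a class in $H^2(X,\Sad)$.

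The $L_\infty$ morphism $f$ induces a natural transformation of deformation functors, and its linear component induces in cohomology a map
\[H^2(f_1)\colon H^2(X,\Sad)\to \mathbb{H}^2(X,\Omega_X^{\leq 1}[2])\]
which sends obstructions to obstructions. Since the bracket on the complex of sheaves $\Omega_X^{\leq 1}[2]$ is trivial, the totalisation $\Tot(\sU,\Omega_X^{\leq 1}[2])$ is an abelian DG-Lie algebra, whose deformation functor is smooth and therefore has no nontrivial obstructions. Hence every obstruction to deformations of $P$ lies in the kernel of $H^2(f_1)$.

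It remains to identify $H^2(f_1)$ with the pairing $x\mapsto\la\At(P),x\ra$. By the explicit description of the linear part in the proof of Theorem~\ref{teo.linfinitoalg}, $f_1$ is obtained by composing the extension cocycle associated with the chosen simplicial lifting with the cyclic form. In the case of the Atiyah Lie algebroid of a principal bundle, the extension cocycle represents exactly the Atiyah class $\At(P)\in \Ext^1_X(T_X,\Sad)$, i.e.\ the class of the extension $0\to\Sad\to\At(P)\to T_X\to 0$. This last identification, matching the abstract extension cocycle of Section~\ref{sec.DGLiealg} with the classical Atiyah class of a principal bundle, is the only nontrivial step and is the expected main obstacle; it parallels the analogous matching carried out for the DG-Lie algebroid of derivations of pairs in Section~\ref{sec.cyclic} which yields the coherent-sheaf corollary, and should go through essentially verbatim once the Atiyah extension of $P$ is expressed in terms of a simplicial lifting on the cover $\sU$.
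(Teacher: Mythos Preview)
Your proposal is correct and follows the paper's own route: specialise Theorem~\ref{teo.linfinitoalg} to the Atiyah Lie algebroid (this is Corollary~\ref{cor.linf per adP}), invoke that $\Tot(\sU,\Sad)$ controls deformations of $P$ (Corollary~\ref{cor.dgladef}), and conclude from abelianness of the target. The identification you flag as the main obstacle is in fact immediate here: the Atiyah class of $P$ is by definition the extension class of the sequence $0\to\Sad\to\sQ\to\Theta_X\to 0$, which is exactly the class represented by the extension cocycle of Section~\ref{sec.DGLiealg} (see the remarks after Definition~\ref{def.Atiyah-cocycle} and the naming of the Atiyah cocycle in Section~\ref{sec.fibratiprinc}), so no separate matching argument is needed.
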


\subsection*{Notation} By $\K$ we always denote a characteristic zero field. Given two complexes of $\Oh_X$-modules $\sF$ and $\mathcal{G}$, $\sF \otimes \mathcal{G}$ denotes $\sF \otimes_{\Oh_X} \mathcal{G}$.   If $V=\oplus V^i$ is either a graded vector space or a graded sheaf, $\bar{v}$ denotes the degree of a homogeneous element $v\in V$. For every integer $p$ the symbol $[p]$ denotes the shift functor, defined by $V[p]^i=V^{p+i}$. For complexes of $\Oh_X$-modules $\sE,\sF$  we denote by $\HOM^*_{\Oh_X}(\sE,\sF)$ the graded sheaf of $\Oh_X$-linear morphisms	\[ \HOM^*_{\Oh_X}(\sE,\sF)=\bigoplus_i \HOM^i_{\Oh_X}(\sE,\sF),\qquad 
\HOM^i_{\Oh_X}(\sE,\sF)=\prod_j\HOM_{\Oh_X}(\sE^j,\sF^{i+j})\,.\]

	\section{DG-Lie algebroids, connections and extension cocycles}\label{sec.DGLiealg}

	The goal of this section is to define $\K$-linear operators 
	$$ \na \colon \Tot(\sU, \LL) \to \Tot(\sU, \Omega_X^1[-1] \otimes \LL)$$ called 
	connections on the kernel $\LL$ of the anchor map of a transitive DG-Lie algebroid. A short review of the Thom-Whitney totalisation functor $\Tot$ is given. In order to construct a connection, we introduce the notion of simplicial lifting of the identity. The section ends with the definition of the extension cocycle associated to a simplicial lifting of the identity, which generalises the notion of Atiyah cocycle. Different notions of Atiyah classes for DG-Lie algebroids have been considered elsewhere in the literature, see e.g. \cite{pairs,liealgebroids,Mehta}.

	Let $X$ be a smooth separated scheme of finite type over a field $\K$ of characteristic zero, and let $\Theta_X, \Omega^1_X$ denote its tangent and cotangent sheaves respectively. Often it will be useful to consider the cotangent sheaf as a trivial complex of sheaves concentrated in degree one, so as to have an inclusion
	$ \Omega^1_X[-1] \to \Omega_X^*$, where $\Omega_X^* = \oplus_p\Omega_X^p[-p]$ denotes the de Rham complex.

	\begin{definition}\label{def.DGLiealg}
		A DG-Lie algebroid over $X$ is a complex of sheaves of $\Oh_X$-modules 
		$\mathcal{A}$ equipped with a $\K$-bilinear bracket $[-,-] \colon \mathcal{A}\times \mathcal{A} \to \mathcal{A}$, which defines a
		DG-Lie algebra structure on the spaces of sections, and with a morphism
		of complexes of $\Oh_X$-modules $\rho \colon \mathcal{A} \to \Theta_X$, called the anchor map, such that the induced
		map on the spaces of sections  is a homomorphism of DG-Lie
		algebras. Moreover  for any sections $a_1 , a_2 $ of $\mathcal{A}$ and $f$  of $\Oh_X$, the following
		Leibniz identity holds:
		\begin{equation*}
		[a_1, fa_2] = f [a_1, a_2] + \rho (a_1)(f) a_2.
		\end{equation*}
	\end{definition}

\begin{example}
	The sheaf $\Theta_X$ is a trivial example of a DG-Lie algebroid concentrated in degree zero, with anchor map given by the identity.
	A DG-Lie algebroid over $\Spec \K$ is exactly a DG-Lie algebra over the field $\K$. Every sheaf of DG-Lie algebras over $\Oh_X$ can be considered as a DG-Lie algebroid over $X$ with trivial anchor map.
\end{example}

\begin{definition}
	Let $(\mathcal{A}, \rho)$ and $(\mathcal{B}, \sigma)$ be DG-Lie algebroids over $X$. A morphism of DG-Lie algebroids $\varphi \colon \mathcal{A} \to \mathcal{B}$ is a morphism of complexes of sheaves which preserves brackets and commutes with the anchor maps:
	\begin{center}
		\begin{tikzcd}
		\mathcal{A} \arrow[rr, "\varphi"] \arrow[rd, "\rho"'] &          & \mathcal{B} \arrow[ld, "\sigma"] \\
		& \Theta_X. &                                 
		\end{tikzcd}
	\end{center}
\end{definition}
Let $(\mathcal{A}, \rho)$ be a DG-Lie algebroid over $X$ and assume  that $\LL= \Ker \rho $ is a finite complex of locally free sheaves.  Notice that on $\LL$ there is a naturally induced graded Lie bracket: for  sections $x,y$ of $\LL$
\[ [x,y ]:= [i(x), i(y)],\]
where $i \colon \LL \to \mathcal{A}$ denotes the inclusion.
This bracket is $\Oh_X$-linear, in fact for any  sections $x,y$ of $\LL$ and $f$ of $\Oh_X$ one has
\[ [x,fy]:= [i(x), i(fy)]= [i(x), f i(y)]= f[i(x), i(y)] + \rho (i(x))(f) y = f[i(x), i(y)] = f[x,y],\]
so that $\LL$ is a sheaf of DG-Lie algebras over $\Oh_X$.

 \begin{definition}{\cite[Chapter 3]{Mack}}
 	A DG-Lie algebroid $(\mathcal{A}, \rho)$ over $X$ is transitive if the anchor map $\rho \colon \mathcal{A} \to \Theta_X$ is surjective.
 \end{definition}
 Let  now $(\mathcal{A}, \rho)$ be a transitive DG-Lie algebroid over $X$, consider the short exact sequence of complexes of sheaves
\begin{equation*}
	\begin{tikzcd}
		0 \arrow[r] & \LL \arrow[r, "i"] & \mathcal{A} \arrow[r, "\rho"] & \Theta_X \arrow[r] & 0
	\end{tikzcd}
\end{equation*}

and tensor it with the shifted cotangent sheaf $\Omega^1_X[-1]$ to obtain the short exact sequence
\begin{equation}\label{eq.succOmega}
	\begin{tikzcd}
	0 \arrow[r] & \Omega_X^1[-1] \otimes \LL \arrow[r, "\Id \otimes i"] & \Omega_X^1[-1] \otimes\mathcal{A} \arrow[r, "\Id \otimes \rho"] & \Omega_X^1[-1] \otimes \Theta_X \arrow[r] & 0.
	\end{tikzcd}
\end{equation}
Because of the isomorphism $\Omega_X^1[-1] \otimes \Theta_X \cong \HOM^*_{\Oh_X}(\Omega_X^1, \Omega_X^1[-1]) \cong \HOM^*_{\Oh_X}(\Omega_X^1,\Omega_X^1)[-1]$, one can consider $\Id_{\Omega^1} \in \Gamma(X, \Omega_X^1[-1] \otimes \Theta_X)$ as an element of degree one.
\begin{definition}\label{def.connglob}
	A \textbf{lifting of the identity} is a global section $D$ in $\Gamma(X,\Omega^1_X[-1] \otimes \mathcal{A})$ such that $(\Id \otimes \rho)(D) = \Id_{\Omega^1} \in \Gamma (X, \Omega_X^1[-1] \otimes \Theta_X)$. 
\end{definition}
 Since the map $\Id \otimes \rho$ is not in general surjective on global sections, a lifting of the identity does not always exist. However a germ of a lifting of the identity, i.e., a preimage of $\Id_{\Omega^1}$ in $\Omega_X^1[-1] \otimes \mathcal{A}$,  always exists.

  	\begin{example}\label{ex.coppie}
  		 For particular DG-Lie algebroids, the notion of lifting of the identity can be related to the more familiar notion of algebraic connection.
 	Let $(\sE, \delta_{\sE})$ be a finite complex of locally free sheaves. Following \cite[Section 5]{DMcoppie},  define the complex of derivations of pairs
 	\[\sD^*(X, \sE) = \{ (h,u) \in \Theta_X \times \HOM^*_\K (\sE,\sE) \ | \ u(fe)= fu(e) + h(f)e,\ \ \forall f \in \Oh_X,\ e \in \sE\}.\]
 	The complex 
 	$\sD^*(X, \sE)$ is a finite complex of coherent sheaves and the natural map
  $$\alpha \colon \sD^*(X, \sE) \to \Theta_X, \quad (h,u)\mapsto h,$$ which is called the anchor map, is surjective, see \cite{DMcoppie}. The graded Lie bracket is defined as 
 	\[ [(h,u), (h', u')] = ([h,h'], [u, u']),\] 
 	where the (graded) Lie brackets on $\Theta_X $ and $\HOM^*_\K (\sE,\sE) $ are the (graded) commutators of the composition products.  For $f \in \Oh_X$ we then have that
 	\begin{align*}
 	&[(h,u), f(h', u')]= [(h,u), (fh', fu')]= ([h,fh'], [u, fu']) =\\
 	& ( h(f) h' + f hh' - fh' h, fuu' + h(f) u' -(-1)^{\overline{u} \overline{u'}} fu'u )= (h(f)h'+f[h,h'], h(f)u' + f[u,u']) =\\
 	& f([h,h'], [u, u']) +h(f)(h', u')  = f [(h,u), (h', u')] + \alpha ((h,u)) (f) (h',u'),
 	\end{align*}
 	hence $(\sD^*(X, \sE), \alpha)$ is a transitive DG-Lie algebroid over $X$.
 	Recall that an algebraic {connection} on the complex of locally free sheaves $\sE$ is the data for every $i$ of an algebraic {connection} on $\sE^i$, i.e., a  $\K$-linear map $D \colon \sE^i \to \Omega_X^1 \otimes \sE^i$ such that for $e \in \sE^i, f \in \Oh_X$
 	\[ D (fe)= \dr f \otimes e + f D(e),\]
 	where $\dr$ denotes the universal derivation $\dr \colon \Oh_X \to \Omega_X^1$. A global algebraic connection on $\sE$ need not exist. 
 	The kernel of the anchor map $\alpha$ is the sheaf of DG-Lie algebras $\HOM^*_{\Oh_X} (\sE,\sE)$,  
 the graded sheaf of $\Oh_X$-linear endomorphisms of $\sE$,
 	with bracket equal to the graded commutator 
 	\[[f,g]= f g -(-1)^{\overline{f}\overline{g}}gf,\]
 	and differential given by 
 	\[g\mapsto [\delta_{\sE},g]=\delta_{\sE} g-(-1)^{\bar{g}}g\delta_{\sE}\,.\] 
 	The short exact sequence in \eqref{eq.succOmega} in this case is isomorphic to
 	\[
 	0\to  \HOM^*_{\Oh_X}(\sE,\Omega^1_X[-1]\otimes \sE)\xrightarrow{g\mapsto (0,g)} \sJ^*_{\Omega^1}\xrightarrow{(\beta, g)\mapsto\beta} 
 	\DER^*_{\K}(\Oh_X,\Omega^1_X[-1])\to 0\,,
 	\]
 	where the complex $\mathcal{J}^*_{\Omega^1}$ is defined as
 		$$ \{(\beta, v)\in \DER^*_{\K}(\Oh_X,\Omega_X^1[-1]) \times \HOM^*_{\K}(\sE,\Omega_X^1[-1]\otimes \sE) \mid 
 	v(fx)=fv(x)+\beta(f)\otimes x,\; \forall x\in \sE,\, f\in \Oh_X\},$$
 	see also \cite[Section 5]{linfsemireg}.
 	In this case a lifting of the identity  is exactly a global algebraic {connection} on the complex of sheaves $\sE$: via the isomorphism   $\Omega_X^1[-1] \otimes \sD^*(X, \sE) \cong \mathcal{J}^*_{\Omega^1}$ 
  a lifting of the identity $D$ corresponds to  $\K$-linear  maps $D' \colon \sE^i \to \Omega_X^1[-1] \otimes \sE^i$ for all $i$ such that $D'(fe)= fD'(e) + \dr (f) \otimes e$  for all $f \in \Oh_X$ and $e \in \sE^i$.
  \end{example}

\medspace

Before defining connections 
on the kernel of the anchor map  of a transitive DG-Lie algebroid,
	  it is useful to give a brief review of the definition and of some of the main properties of the Thom-Whitney totalisation functor $\Tot$; for more details see e.g. \cite{ sheaves,semicos,semireg, LMDT, Nav}.
   	The Thom-Whitney totalisation is a functor  from the category of semicosimplicial DG-vector spaces to the category of DG-vector spaces.
  For every $n \geq 0$ consider
   \[ A_n = \frac{\K[t_0,  \ldots, t_n,dt_0, \ldots, dt_n]}{(1- \sum_i t_i , \sum_i dt_i)}\]
   the commutative differential graded algebra of polynomial differential forms on the affine standard $n$-simplex, and the maps
   \begin{equation*}
   \delta_k^* \colon A_n \to A_{n-1},\ \ 0\leq k \leq n\quad \quad\ \ \delta_k^*(t_i)=
   \begin{cases}
   	t_i\quad i<k\\
   	0 \quad i=k\\
   	t_{i-1} \quad i>k.
   \end{cases}
  \end{equation*}
  \begin{definition}\label{def.tot}
The Thom-Whitney totalisation of a semicosimplicial DG-vector space  $V$
\begin{center}
	\begin{tikzcd}
	V: & V_0 \arrow[r, "\delta_0", shift left] \arrow[r, "\delta_1"', shift right] & V_1 \arrow[r, "\delta_0", shift left=2] \arrow[r, "\delta_2"', shift right=2] \arrow[r, "\delta_1" description] & V_2 \arrow[r, shift left=3] \arrow[r, shift left] \arrow[r, shift right=3] \arrow[r, shift right] & \cdots
	\end{tikzcd}
\end{center}
is the DG-vector space
\[ \Tot(V)= \big\{ (x_n) \in \prod_{n\geq 0} A_n \otimes V_n \ | \ (\delta^*_k \otimes \Id )x_n = (\Id \otimes \delta_k) x_{n-1} \text{ for every } 0 \leq k \leq n \big\} ,\]
with differential induced by the one on $\prod_{n\geq 0} A_n \otimes V_n$. To simplify notation, we will denote this differential by $d_{\Tot}= d_{A}+ d_{V}$, where $d_A$ denotes the differential of polynomial differential forms, and $d_V$ the differential on $V$.

If $f\colon V \to W$ is a morphism of semicosimplicial DG-vector spaces, then $\Tot(f) \colon \Tot(V) \to \Tot(W)$ is defined as the restriction of the map
\[ \prod \Id \otimes f \colon \prod_{n\geq 0} A_n \otimes V_n \to \prod_{n\geq 0} A_n \otimes W_n .\]
\end{definition}

The $\Tot$ functor is exact (see e.g. \cite{Rational, Nav}): given semicosimplicial DG-vector spaces $V,W, Z$ and morphisms
$f\colon V\to W$, $g \colon W \to Z$ such that for every $n \geq 0$ the sequence
\begin{center}
	\begin{tikzcd}
	0 \arrow[r] & V_n \arrow[r, "f"] & W_n \arrow[r, "g"] & Z_n \arrow[r] & 0
	\end{tikzcd}
\end{center}
is exact, one obtains an exact sequence
\begin{center}
	\begin{tikzcd}
	0 \arrow[r] & \Tot(V) \arrow[r, "f"] & \Tot(W) \arrow[r, "g"] & \Tot(Z) \arrow[r] & 0.
	\end{tikzcd}
\end{center}

 Given two semicosimplicial DG-vector spaces $V$ and $W$, then $\Tot(V \times W)$ is naturally isomorphic to $\Tot(V)\times \Tot(W)$. An important consequence is the preservation of multiplicative structures; in particular, we will use the fact that the functor $\Tot$ sends semicosimplicial DG-Lie algebras 
to DG-Lie algebras.
\begin{example}\label{ex.cech}
	Let $\mathcal{E}$ be a finite complex of quasi-coherent sheaves on $X$, and $\mathcal{U}= \{ U_i\}$  an open cover of $X$. Denote by $U_{i_1 \cdots i_n}= U_{i_1}\cap \cdots \cap U_{i_n}$, and  consider the semicosimplicial DG-vector space of \v{C}ech cochains:
	\begin{center}
		\begin{tikzcd}
		\mathcal{E}(\mathcal{U}): \quad \prod\limits_i \mathcal{E}(U_i) \arrow[r, shift left, "\delta_0"] \arrow[r, shift right, "\delta_1"'] & {\prod\limits_{i,j}\mathcal{E}(U_{ij})} \arrow[r, shift left=2, "\delta_0"] \arrow[r, shift right=2, "\delta_2"'] \arrow[r, "\delta_1" description] & {\prod\limits_{i,j,k}\mathcal{E}(U_{ijk})} \arrow[r, shift left=3] \arrow[r, shift left] \arrow[r, shift right=3] \arrow[r, shift right] & \cdots.
		\end{tikzcd}
	\end{center}
	
	The Whitney integration theorem states that there exists a quasi-isomorphism between the Tot complex $\Tot(\mathcal{U},\mathcal{E})$ and the complex of \v{C}ech cochains $C^*(\sU, \mathcal{E})$ of  $\mathcal{E}$ (see \cite{Whi} for the $C^\infty$ version, \cite{Dup,  getzler04,luigi,Nav} for the algebraic version used here). Hence
	if the open cover $\mathcal{U}$ is affine the cohomology of $\Tot(\mathcal{U},\mathcal{E})$ is isomorphic to the hypercohomology of the complex of sheaves $\mathcal{E}$. If the open cover is affine the complex $\Tot(\mathcal{U},\mathcal{E})$ is a model for the module $\mathbf{R}\Gamma(X, \sE)$ of derived global sections  of $\sE$, see e.g. \cite{semicos,Meazz}.
	
	Moreover there is a canonical inclusion of the global sections of $\mathcal{E}$ in the totalisation $\Tot(\mathcal{U}, \mathcal{E})$: in fact, the injection
	\[\iota \colon \Gamma(X, \mathcal{E}) \to \prod\limits_i \mathcal{E}(U_i)\] is
 such that $\delta_0 \iota= \delta_1 \iota $ and therefore for every $a \in \Gamma(X, \mathcal{E})$ $$(1 \otimes \iota(a), 1 \otimes \delta_0 \iota (a), 1 \otimes \delta_0^2 \iota(a), \cdots)$$ belongs to $\Tot(\mathcal{U}, \mathcal{E})$. 
It is easy to see that if the complex $\mathcal{E}$ has trivial differential, every global section gives a cocycle in $\Tot(\sU, \mathcal{E})$.
\end{example}

\medspace

We are now ready to define connections on $\LL= \Ker \rho$, the kernel of the anchor map of a transitive DG-Lie algebroid $(\mathcal{A}, \rho)$ over $X$. Assume that $\LL$ is 
a finite complex of locally free sheaves and  
 fix an affine open cover $\mathcal{U}= \{ U_i\}$ of $X$. The short exact sequence 
 \begin{equation*}
 \begin{tikzcd}
 0 \arrow[r] & \Omega_X^1[-1]\otimes \LL \arrow[r, "\Id \otimes i"] & \Omega_X^1[-1] \otimes\mathcal{A} \arrow[r, "\Id \otimes \rho"] & \Omega_X^1[-1] \otimes \Theta_X \arrow[r] & 0.
 \end{tikzcd}
 \end{equation*}
 gives a short exact sequence of the corresponding semicosimplicial complexes of \v{C}ech cochains, and since the functor Tot is exact there is an exact sequence
 \begin{center}
 	\begin{tikzcd}[]
 	0 \arrow[r] & \Tot(\mathcal{U},  \Omega_X^1[-1]\otimes\LL ) \arrow[r, "\Id\otimes i"] & \Tot(\mathcal{U},\Omega_X^1[-1]\otimes  \mathcal{A} ) \arrow[r, "\Id\otimes \rho"] & \Tot(\mathcal{U}, \Omega_X^1[-1]\otimes \Theta_X) \arrow[r] & 0.
 	\end{tikzcd}
 \end{center}

 Denote by $d$ the differential on $\mathcal{A}$ and $\LL$, which can be extended to $\Omega_X^1[-1] \otimes \mathcal{A} $ and to $\Omega_X^1 [-1]\otimes \LL$ by setting $$d (\eta \otimes x) = (-1)^{\overline{\eta}} \eta \otimes d x = - \eta \otimes dx.$$
Denote by $d_{\Tot}$ the differentials on all the above $\Tot$ complexes: for $\Tot(\mathcal{U},  \Omega_X^1[-1]\otimes\LL )$ and $\Tot(\mathcal{U},\Omega_X^1[-1]\otimes  \mathcal{A} )$ the differential $d_{\Tot}$ is equal to $d_A + d$, while for $\Tot(\mathcal{U}, \Omega_X^1[-1]\otimes \Theta_X)$ one has that $d_{\Tot}$ is just $d_A$, where $d_A$ is the differential of polynomial differential forms on the affine simplex, see Definition~\ref{def.tot}.

 Because of the natural inclusion of global sections in the totalisation remarked upon in Example~\ref{ex.cech}, $\Id_{\Omega^1}$ belongs to $\Tot(\mathcal{U},   \Omega_X^1[-1]\otimes\Theta_X)$, where it has degree one.  
 \begin{definition}\label{def.conn-simpl}
 	A \textbf{simplicial lifting of the identity}  is an element $D $ of $\Tot(\mathcal{U},   \Omega_X^1[-1]\otimes \mathcal{A})$ such that $(\Id\otimes \rho) (D) = \Id_{\Omega^1} $ in $  \Tot(\mathcal{U}, \Omega_X^1[-1]\otimes \Theta_X)$.
 \end{definition}
It is clear that a simplicial lifting of the identity always exists and that $D$ has degree one in $\Tot(\sU, \Omega_X^1[-1] \otimes \mathcal{A})$.

\begin{remark}\label{rem.rho-deRham}
	Notice that 
	via the isomorphism $\Omega_X^1[-1] \otimes \Theta_X = \Omega_X^1[-1] \otimes \DER_{\K} (\Oh_X, \Oh_X) \cong \DER^*_{\K}(\Oh_X, \Omega_X^1 [-1])$, 
	we have that $$(\Id \otimes \rho)(D) = \dr \in \Tot(\sU, \DER^*_{\K}(\Oh_X, \Omega_X^1 [-1])).$$
\end{remark}

\medspace
 
 In order to define a connection on $\LL$, it is necessary to define a Lie bracket $$[-,-]\colon \Tot(\sU, \Omega_X^1[-1]\otimes \mathcal{A}) \times  \Tot(\sU, \LL) \to \Tot(\sU,\Omega_X^{1}[-1] \otimes \LL),$$ induced by the bracket of the following lemma.

\begin{lemma}\label{lem.operatoreagg'}
	There exists a well defined $\K$-bilinear bracket
	\[[-,-] \colon  (\Omega_X^1[-1]\otimes \mathcal{A}) \times \LL \to \Omega_X^1[-1] \otimes \LL.\]
\end{lemma}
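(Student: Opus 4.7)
The plan is to define the bracket by the formula $[\eta\otimes a, x] := \eta \otimes [a,x]$ on simple tensors (with the obvious sign conventions for the shift), and then to verify two things: that the right-hand side actually lands in $\Omega_X^1[-1]\otimes \LL$, and that the prescription is $\Oh_X$-balanced in the first tensor factor, hence descends to the tensor product over $\Oh_X$.

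For the first point, I would use that $\rho$ is a morphism of sheaves of DG-Lie algebras, so for $a\in\mathcal{A}$ and $x\in\LL=\Ker\rho$ one has
\[
\rho([a,x]) = [\rho(a), \rho(x)] = 0,
\]
showing that $[a,x]\in \LL$. Hence $\eta\otimes [a,x]$ makes sense as an element of $\Omega_X^1[-1]\otimes\LL$. Note that $\K$-bilinearity is immediate from the $\K$-bilinearity of the bracket on $\mathcal{A}$.

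The main (and only real) point is to verify well-definedness with respect to the $\Oh_X$-module structure used to form $\Omega_X^1[-1]\otimes\mathcal{A}$. That is, for a local section $f\in\Oh_X$, one must check that
\[
(f\eta)\otimes [a,x] \;=\; \eta\otimes [fa,x]
\]
in $\Omega_X^1[-1]\otimes \LL$, which reduces to the identity $[fa,x]=f[a,x]$ in $\LL$. This is where the condition $x\in\Ker\rho$ is crucial: applying graded antisymmetry and the Leibniz identity of Definition~\ref{def.DGLiealg} to the second slot,
\[
[fa,x] \;=\; -(-1)^{\bar a\,\bar x}[x,fa] \;=\; -(-1)^{\bar a\,\bar x}\bigl(f[x,a]+\rho(x)(f)\,a\bigr) \;=\; f[a,x]-(-1)^{\bar a\,\bar x}\rho(x)(f)\,a,
\]
and since $\rho(x)=0$ the correction term vanishes, giving $[fa,x]=f[a,x]$ as required. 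One also verifies that the bracket is $\Oh_X$-linear on $\LL$ in the second argument — this is already recorded in the excerpt as the statement that the induced bracket on $\LL$ is $\Oh_X$-linear — which is what is needed in order to later extend the bracket to a map on totalisations tensored with $\Omega_X^1[-1]$.

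The main obstacle (minor, but the only substantive step) is remembering to pass through graded antisymmetry before invoking Leibniz, since the Leibniz rule in Definition~\ref{def.DGLiealg} is written with the scalar on the right-hand argument; after that move the vanishing of $\rho(x)$ does all the work. No degeneracy, coherence, or sheafification issue arises, since the construction is purely algebraic and local, and both sides of the defining formula are bi-additive in $\eta\otimes a$ and in $x$.
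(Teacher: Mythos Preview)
Your proof is correct and follows essentially the same approach as the paper: define $[\eta\otimes a,x]=\eta\otimes[a,x]$, check that the result lies in $\Omega_X^1[-1]\otimes\LL$ via $\rho([a,x])=[\rho(a),\rho(x)]=0$, and verify $\Oh_X$-balance using the ``reverse'' Leibniz identity $[fa,x]=f[a,x]-(-1)^{\bar a\,\bar x}\rho(x)(f)\,a$ together with $\rho(x)=0$. The only cosmetic difference is that the paper states this reverse Leibniz rule directly, whereas you derive it by first applying graded antisymmetry.
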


\begin{proof}
	Denote by $i\colon \LL\to \mathcal{A}$ the inclusion, take $\eta \otimes a$ with $\eta \in \Omega_X^1[-1]$ and $a \in \mathcal{A}$, and define for $x \in \LL$
	\[ [\eta \otimes a, x]:= \eta \otimes [a, i(x)].\]
	Notice that the Leibniz identity in Definition~\ref{def.DGLiealg} implies that
	\[ [fa_1, a_2] = f[a_1,a_2] - (-1)^{\overline{a_1}\ \overline{a_2}} \rho (a_2) (f) a_1.\]
	Hence the bracket $[\eta \otimes a, x]$ is well defined: for any $f \in \Oh_X$
	\[ [\eta \otimes f a , x ] = \eta \otimes [fa, x] = \eta \otimes \big(f [a, x] - (-1)^{\overline{a}\ \overline{x}} \rho (x) (f) a\big) = \eta \otimes f [a, x] = f \eta \otimes [a, x ] = [f \eta \otimes a, x].\]
	It is  clear that $[\eta \otimes a, x]$ belongs to $\Omega^1_X[-1]\otimes \LL$:
	\[ (\Id \otimes \rho) ([\eta \otimes a, x]) = (\Id \otimes \rho) (\eta \otimes [a,x]) = \eta \otimes [\rho (a), \rho(x)]=0.\]
\end{proof}

Since the functor $\Tot$ preserves products, the map  $$ [-,-] \colon (\Omega_X^1[-1] \otimes \mathcal{A}) \times \LL \to \Omega^1_X[-1] \otimes \LL$$ induces a $\K$-bilinear map
\begin{equation}\label{eq.BracketTot}
 [-,-]\colon \Tot(\sU, \Omega_X^1[-1] \otimes \mathcal{A}) \times  \Tot(\sU, \LL) \to \Tot(\sU,\Omega_X^{1}[-1] \otimes \LL), 
\end{equation}
which is defined componentwise as the restriction of
	\begin{align*} A_n \otimes \prod& (\Omega^1_X[-1] \otimes \mathcal{A} )(U_{i_1 \cdots i_n}) \times  A_n \otimes \prod  \mathcal{L} (U_{i_1 \cdots i_n}) \xrightarrow{[-,-] }  A_n \otimes \prod (\Omega^1_X[-1] \otimes \LL )(U_{i_1 \cdots i_n})\\
&[\eta_n\otimes (t_{i_1\cdots i_n}), \phi_n \otimes (u_{i_1\cdots i_n}) ] = \eta_n \phi_n \otimes ([(-1)^{\overline{\phi_n}\ \overline{t_{i_1\cdots i_n}}}t_{i_1\cdots i_n}, u_{i_1\cdots i_n}]),
	\end{align*}
	for $\eta_n, \phi_n$ in $A_n$, $t_{i_1\cdots i_n}$ in $ (\Omega^1_X[-1] \otimes \mathcal{A} )(U_{i_1 \cdots i_n}) $ and $u_{i_1\cdots i_n}$ in $\mathcal{L} (U_{i_1 \cdots i_n})$.

	\begin{definition}\label{def.connessione}
		A \textbf{connection} on $\LL$ is the adjoint operator of a simplicial lifting of the identity $D \in  \Tot(\mathcal{U},  \Omega_X^1[-1]\otimes\mathcal{A}  )$ 
		\begin{equation*}
		\nabla =[D, -]\colon  \Tot(\mathcal{U},  \LL  ) \to \Tot(\mathcal{U},   \Omega_X^{1}[-1]\otimes\LL ),
		\end{equation*}
		where $[-,-] \colon  \Tot(\sU, \Omega_X^1[-1] \otimes \mathcal{A}) \times  \Tot(\sU, \LL) \to \Tot(\sU,\Omega_X^{1}[-1] \otimes \LL)$ is the bracket in \eqref{eq.BracketTot}.
		It is a $\K$-linear operator.
	\end{definition}

\medspace

We will now examine the relationship between connections and particular representatives of extension classes. 
The short exact sequence 
\begin{equation}\label{eq.succEs}
	\begin{tikzcd}
	0 \arrow[r] & \LL  \arrow[r] & \mathcal{A} \arrow[r, "\rho"] & \Theta_X \arrow[r] & 0
	\end{tikzcd}
\end{equation}
gives an {extension class} $[u_\rho ] \in \Ext_X^1(\Theta_X, \LL)$. 
It is possible to give a representative of $[u_\rho]$ in the totalisation $\Tot(\sU, \Omega_X^1[-1] \otimes \LL)$ with respect to an affine open cover $\mathcal{U}$ of $X$.

\begin{definition}\label{def.Atiyah-cocycle}
	An \textbf{extension cocycle} $u$ of the transitive DG-Lie algebroid $\mathcal{A}$ is the differential of a simplicial lifting of the identity $D$ in $\Tot(\mathcal{U},   \Omega_X^1[-1]\otimes \mathcal{A})$, $u=d_{\Tot} D$.
\end{definition}
Notice that $ u $ belongs to $ \Tot (\mathcal{U}, \Omega_X^1[-1]\otimes \LL )$: 
$$(\Id\otimes \rho) u = (\Id\otimes \rho) d_{\Tot}(D) = d_{\Tot} (\Id\otimes \rho) D = d_{\Tot} \Id_{\Omega^1}=0,$$ where the last equality is a consequence of the fact that
$\Id_{\Omega^1}$ is a global section and 
$\Omega_X^1[-1] \otimes \Theta_X$ has trivial differential {(see Example~\ref{ex.cech})}. 
Note that  $u$ has degree two in $ \Tot (\mathcal{U}, \Omega_X^1[-1]\otimes \LL )$ and that $d_{\Tot} u = d_{\Tot} d_{\Tot} D =0$.

Using the isomorphisms
$\Omega_X^1[-1]\otimes \LL \cong \HOM^*_{\Oh_X}(\Theta_X[1], \LL) \cong \HOM^*_{\Oh_X}(\Theta_X, \LL)[-1],$
the cohomology class of $u$ belongs to 
\begin{align*}
H^2(\Tot(\mathcal{U}, \Omega_X^1[-1]\otimes \LL)) &\cong   H^2(\Tot(\sU, \HOM^*_{\Oh_X}(\Theta_X, \LL)[-1])) \cong \mathbb{H}^2(X, \HOM^*_{\Oh_X}(\Theta_X, \LL)[-1]) \\
&\cong \mathbb{H}^1(X, \HOM^*_{\Oh_X}(\Theta_X, \LL))  \cong \Ext_X^1(\Theta_X, \LL).
\end{align*}
This cohomology class does not depend on the chosen simplicial lifting of the identity: if $D$ and $D'$ are two simplicial liftings of the identity in $\Tot(\sU, \Omega_X^1[-1]\otimes \mathcal{A})$, we have that $(\Id \otimes \rho)(D-D')= \Id_{\Omega^1}- \Id_{\Omega^1}=0$, so  $D-D'$ belongs to $ \Tot(\sU, \Omega_X^1 [-1]\otimes \LL)$ and
$d_{\Tot}D $ and $d_{\Tot}D'$ differ by the coboundary $d_{\Tot}(D'-D)$.
It is easy to see that the cohomology class of $u$ is trivial if and only if the short exact sequence in \eqref{eq.succEs} splits.

\begin{lemma}\label{lem.diff e nabla}
	Let  $\na \colon \Tot(\sU, \LL) \to \Tot(\mathcal{U},  \Omega_X^1[-1]\otimes\mathcal{L}  )$ be a connection on $\LL$, associated to the simplicial lifting of the identity $D \in \Tot(\mathcal{U}, \Omega_X^1[-1] \otimes \mathcal{A})$. Let $u=d_{\Tot} D$ be the corresponding extension cocycle, then for every $x$ in $\Tot(\sU, \LL)$ we have that 
	\begin{equation*}
	\na(d_{\Tot} x) = [u,x ]- d_{\Tot} \na (x).
	\end{equation*}
\end{lemma}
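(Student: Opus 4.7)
The plan is to deduce the identity directly from the graded Leibniz rule for the bracket \eqref{eq.BracketTot} with respect to the total differential $d_{\Tot}$. Concretely, I would first establish that for every homogeneous $\alpha \in \Tot(\sU, \Omega_X^1[-1]\otimes \mathcal{A})$ and every $y \in \Tot(\sU, \LL)$ one has
\[
d_{\Tot}[\alpha, y] \;=\; [d_{\Tot}\alpha, y] + (-1)^{\bar\alpha}[\alpha, d_{\Tot} y].
\]
Granting this, we substitute $\alpha = D$, which has total degree one, and $y = x$: the rule yields
\[
d_{\Tot}\nabla(x) \;=\; d_{\Tot}[D, x] \;=\; [d_{\Tot}D, x] - [D, d_{\Tot} x] \;=\; [u, x] - \nabla(d_{\Tot} x),
\]
and the claim follows by rearrangement. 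No homogeneity assumption on $x$ is needed, since both sides are additive.

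To prove the Leibniz rule, I would work with the explicit componentwise description of the bracket given just before Definition~\ref{def.connessione}. By bilinearity, it is enough to check the identity on decomposable tensors $\eta_n \otimes t$ and $\phi_n \otimes u$ living in a single factor $A_n \otimes (\Omega_X^1[-1]\otimes \mathcal{A})(U_{i_1\cdots i_n})$ and $A_n \otimes \LL(U_{i_1\cdots i_n})$. Since $d_{\Tot} = d_A + d$, and both $d_A$ (a graded derivation of $A_n$) and $d$ (a derivation of the bracket on $\mathcal{A}$, hence on $\LL$ via the inclusion $i$) separately satisfy Leibniz with respect to the bracket built out of a wedge in $A_n$ and a Lie bracket on sheaf sections, the identity follows by adding the two contributions and tracking the Koszul signs coming from the shift in $\Omega_X^1[-1]$ and from commuting $\phi_n$ past the sheaf component $t$.

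The whole argument is essentially sign bookkeeping: the only step that deserves attention is recording that the shift makes $D$ of total degree one, which is precisely what produces the sign $-[D, d_{\Tot} x]$ and therefore the minus sign in the final formula. There is no genuinely hard step, and the well-definedness of the bracket needed to interpret the right-hand side has already been supplied by Lemma~\ref{lem.operatoreagg'}.
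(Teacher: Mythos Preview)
Your proposal is correct and follows essentially the same route as the paper's proof: establish the graded Leibniz rule $d_{\Tot}[\alpha,y]=[d_{\Tot}\alpha,y]+(-1)^{\bar\alpha}[\alpha,d_{\Tot}y]$ for the bracket \eqref{eq.BracketTot} by checking it componentwise (using that $d$ is a derivation of the sheaf bracket and $d_A$ a derivation of the $A_n$-product), and then specialise to $\alpha=D$ of degree one. A tiny notational quibble: you reuse the letter $u$ for a generic decomposable tensor in $A_n\otimes\LL(U_{i_1\cdots i_n})$, which clashes with the extension cocycle $u=d_{\Tot}D$; choose a different symbol to avoid confusion.
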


\begin{proof}
	Recall that $d$ denotes the differential of $\mathcal{A}$ and $\LL$, which can be extended to $\Omega_X^1[-1] \otimes \mathcal{A} $ and to $\Omega_X^1[-1] \otimes \LL$ by setting $d (\eta \otimes x) = (-1)^{\overline{\eta}} \eta \otimes d x = -\eta \otimes d x.$
	It is easy to see that for the $\K$-bilinear map  $ [-,-] \colon (\Omega_X^1[-1] \otimes \mathcal{A}) \times \LL \to \Omega^1_X[-1] \otimes \LL$ of Lemma~\ref{lem.operatoreagg'},
	\begin{equation*}
	d[\eta \otimes a, x]= [d(\eta \otimes a), x] + (-1)^{\overline{\eta} + \overline{a}} [\eta \otimes a, dx].
	\end{equation*}
	A straightforward calculation then shows that for $z \in \Tot(\sU, \Omega_X^1[-1] \otimes \mathcal{A})$ and $w \in \Tot(\sU, \LL)$,  \[d_{\Tot} [z,w]= [d_{\Tot} z, w] + (-1)^{\overline{z}} [z, d_{\Tot}w],\]
	and the conclusion follows from the fact $u = d_{\Tot}D$.
\end{proof}

\section{Cyclic forms and $L_\infty$ morphisms}\label{sec.cyclic}

This section describes cyclic forms on DG-Lie algebroids and illustrates how DG-Lie algebroid representations give rise to cyclic forms. We then discuss induced  
cyclic forms on the Thom-Whitney totalisation and the property of $d_{\Tot}$-closure. 
 The central result is the construction of a $L_\infty$ morphism associated to a connection and to a $d_{\Tot}$-closed cyclic form for a transitive DG-Lie algebroid.
   This allows us to state the results of \cite{linfsemireg} for a coherent sheaf admitting a finite locally free resolution on a smooth separated scheme of finite type over a field $\K$ of characteristic zero.

Let $\mathcal{A}$ be a DG-Lie algebroid over a smooth separated scheme $X$ of finite type over a field $\K$ of characteristic zero, with anchor map $\rho \colon \mathcal{A} \to \Theta_X$.  Assume that the kernel of the anchor map $\LL$ is a finite complex of locally free sheaves. Notice that for any $a \in \mathcal{A}$ and $x \in \LL$, the bracket 
$[a,x]$ belongs to $\LL$:
\[ \rho ([a,x]) = [\rho (a), \rho(x)]=0.\]
\begin{definition}
	A cyclic bilinear form on a DG-Lie algebroid $(\mathcal{A},\rho)$ is a graded symmetric $\Oh_X$-bilinear product of degree zero on $\LL= \Ker \rho$
	\[\la -, - \ra \colon \LL \times \LL \to \Oh_X \]
	such that for all sections $x,y$ of $\LL$ and $a $ of $ \mathcal{A}$
	\[ \la [a,x], y \ra  + (-1)^{\overline{a}\ \overline{x}} \la x, [a, y] \ra = \rho (a)(\la x, y \ra).\]
\end{definition}

Notice that the definition implies that for all $x,y,z \in \LL$
\[ \la x, [y,z] \ra =  \la [x,y], z \ra.\]

In the following two examples, the cyclicity of the forms will follow from  Lemma~\ref{lem.compat}.
\begin{example}\label{ex.ad fasci}
	An example of cyclic form on $(\mathcal{A},\rho)$ is induced by the Killing form. Consider the adjoint representation as a morphism of sheaves of DG-Lie algebras 
	\[ \ad \colon \LL \to \HOM^*_{\Oh_X}(\LL, \LL),\quad a \mapsto [a,-]\]
	and consider the trace map $\Tr \colon \HOM^*_{\Oh_X}(\LL, \LL) \to \Oh_X$, which is morphism of sheaves of DG-Lie algebras (when considering $\Oh_X$ as a trivial sheaf of DG-Lie algebras).  Then one can define the form 
	\[ \langle -, - \rangle \colon \LL \times \LL  \to \Oh_X, \quad (x,y) \mapsto \Tr(\ad x \ad y).\]

\end{example}

	\begin{example}\label{ex.traccia}
		For the DG-Lie algebroid $\sD^*(X, \sE)$ of Example~\ref{ex.coppie}, 
		\begin{center}
			\begin{tikzcd}
			0 \arrow[r] &\HOM^*_{\Oh_X}(\sE, \sE) \arrow[r] &\sD^*(X,\sE) \arrow[r] &\Theta_X \arrow[r] &0,
			\end{tikzcd}
		\end{center}a natural bilinear form on $\HOM^*_{\Oh_X}(\sE, \sE)$ is induced by the trace map $\Tr \colon \HOM^*_{\Oh_X}(\sE, \sE) \to \Oh_X$ as follows:
	\[ \HOM^*_{\Oh_X}(\sE, \sE) \times \HOM^*_{\Oh_X}(\sE, \sE) \to \Oh_X, \quad \la f, g\ra := -\Tr (fg). \]
	\end{example}

\medspace
	
	The Leibniz identity of Definition~\ref{def.DGLiealg}
	\[ [a, fx] = f[a,x]+ \rho(a)(f) x \quad \forall a \in \mathcal{A},\ x \in \LL,\ f \in \Oh_X \]
	 can be restated by noticing 
	 that for all $a$ in $\mathcal{A}$ the operator $(\rho(a), [a,-])$ belongs to $\sD^*(X, \LL)$ of Example~\ref{ex.coppie}. Hence there is 
	a morphism of DG-Lie algebroids
	\[ \ad \colon \mathcal{A} \to \sD^*(X, \LL).\]
The morphism $\ad \colon \mathcal{A} \to \sD^*(X, \LL)$  restricts to the morphism $\ad \colon \LL \to \HOM^*_{\Oh_X}(\LL, \LL)$ of Example~\ref{ex.ad fasci}, so that the following diagram commutes
\begin{center}
	\begin{tikzcd}
		0 \arrow[r] & \LL \arrow[r, "i"] \arrow[d, "\ad"] & \mathcal{A} \arrow[r, "\rho"]\arrow[d, "\ad"] & \Theta_X \arrow[r] \arrow[d, double, no head]  &0\\
		0 \arrow[r] & \HOM^*_{\Oh_X}(\LL, \LL) \arrow[r] & \sD^*(X,\LL) \arrow[r, "\alpha"] & \Theta_X \arrow[r] &0.
	\end{tikzcd}
\end{center}
This motivates the following definition:
\begin{definition}
	A representation of a DG-Lie algebroid $(\mathcal{A}, \rho)$ over $X$ is a morphism of DG-Lie algebroids
	$\theta \colon \mathcal{A} \to \sD^*(X, \sE)$, where $\sE$ is a finite complex of locally free sheaves over $X$:
	\begin{center}
		\begin{tikzcd}
		\mathcal{A} \arrow[rd, "\rho"'] \arrow[rr, "\theta"] &          & {\sD^*(X, \mathcal{E})} \arrow[ld, "\alpha"] \\
		& \Theta_X .&                                            
		\end{tikzcd}
	\end{center}
\end{definition}

Every representation $\theta \colon \mathcal{A} \to \sD^*(X, \sE)$ induces a form
$\la -,- \ra_{\theta} \colon \LL \times \LL \to \Oh_X$:  for any $x \in \LL$ we have that $\alpha \circ \theta (x)= \rho (x)=0$, so that 
\[ \theta|_{\LL} \colon \LL \to \HOM^*_{\Oh_X}(\sE,\sE),\] and using the trace map
$\Tr \colon \HOM^*_{\Oh_X}(\sE,\sE) \to \Oh_X$
we can define 
for $x,y$ sections of $\LL$
\[ \la x, y \ra_{\theta} := \Tr (\theta(x) \theta(y)).\]
Forms obtained in this way are cyclic:

\begin{lemma}\label{lem.compat}
	For any DG-Lie algebroid representation $\theta \colon \mathcal{A} \to \sD^*(X, \sE)$ the induced form $ \langle -, - \rangle_\theta \colon  \LL  \times   \LL \to \Oh_X$ is cyclic.
\end{lemma}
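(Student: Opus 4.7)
The plan is to verify the three defining properties of a cyclic form: $\Oh_X$-bilinearity, graded symmetry, and the derivation identity. The key observation is that for any section $x$ of $\LL$, the commutativity of the diagram defining a representation forces $\alpha(\theta(x)) = \rho(x) = 0$, so $\theta(x)$ lies in $\HOM^*_{\Oh_X}(\sE, \sE)$ and is therefore $\Oh_X$-linear. Consequently $\theta(x)\theta(y)$ is $\Oh_X$-linear and its supertrace is a well-defined element of $\Oh_X$; graded symmetry and $\Oh_X$-bilinearity of $\la x, y \ra_\theta$ will then follow from the analogous properties of the supertrace on $\HOM^*_{\Oh_X}(\sE, \sE)$.

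For the cyclic identity, I would write $\theta(a) = (\rho(a), u_a) \in \sD^*(X, \sE)$ for a section $a$ of $\mathcal{A}$. Since $\theta$ preserves brackets on sections and $\theta(x)$ has trivial first component, the bracket formula in $\sD^*(X, \sE)$ from Example~\ref{ex.coppie} gives $\theta([a, x]) = [u_a, \theta(x)]$ in $\HOM^*_{\Oh_X}(\sE, \sE)$, and similarly $\theta([a, y]) = [u_a, \theta(y)]$. Expanding the graded commutators inside the supertrace and applying its cyclicity, the two middle cross terms cancel, leaving
\[ \la [a, x], y \ra_\theta + (-1)^{\bar a \bar x} \la x, [a, y] \ra_\theta = \Tr([u_a, \theta(x)\theta(y)]). \]

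It then remains to establish the compatibility formula $\Tr([u_a, T]) = \rho(a)(\Tr T)$ for all $T \in \HOM^*_{\Oh_X}(\sE, \sE)$, which is precisely the $\sD^*(X, \sE)$-equivariance of the trace map when $\Oh_X$ is regarded as a $\sD^*(X, \sE)$-module via the anchor $\alpha$. I would verify this by a local computation in a homogeneous $\Oh_X$-basis $\{e_i\}$ of $\sE$: writing $u_a(e_i) = \sum_j (u_a)_{ji} e_j$ and $T(e_i) = \sum_j T_{ji} e_j$ with entries in $\Oh_X$, the derivation identity $u_a(fe) = f u_a(e) + \rho(a)(f) e$ produces the desired diagonal contribution $\rho(a)(\Tr T)$, while the remaining off-diagonal terms pair $T_{ji}(u_a)_{ij}$ against $(u_a)_{ji} T_{ij}$ and cancel thanks to the commutativity of $\Oh_X$ in degree zero together with the sign constraints imposed by the supertrace. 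The main obstacle will be the careful bookkeeping of Koszul signs throughout this computation; once these are handled, the cyclic identity is immediate, and applying the formula with $T = \theta(x)\theta(y)$ completes the proof.
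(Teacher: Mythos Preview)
Your proposal is correct and follows essentially the same route as the paper: reduce the cyclic identity to showing that $\Tr([u_a,\theta(x)\theta(y)]) = \rho(a)(\la x,y\ra_\theta)$ via a local matrix computation in a basis of $\sE$, where the derivation property of $u_a$ produces the term $\rho(a)(\Tr T)$ and the remaining matrix-product terms cancel. The only differences are cosmetic: you explicitly record bilinearity and graded symmetry and phrase the key step as the general equivariance statement $\Tr([u_a,T]) = \rho(a)(\Tr T)$, whereas the paper works directly with $T=\theta(x)\theta(y)$ and makes the case split $\bar a\neq 0$ versus $\bar a=0$ explicit (in the former case $\rho(a)=0$ and $u_a$ is $\Oh_X$-linear, so both sides vanish).
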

\begin{proof}

	For $a \in \mathcal{A}$ and $x,y \in \LL$
	\begin{align*}
	\la [a,x], y \ra_{\theta} +  (-1)^{\overline{a}\ \overline{x}}\la x, [a,y]\ra_{\theta} &= \Tr (\theta ([a,x])\theta (y) +  (-1)^{\overline{a}\ \overline{x}}\theta (x)\theta([a,y]))\\
	& =\Tr ([\theta (a),\theta(x)]\theta (y) +  (-1)^{\overline{a}\ \overline{x}} \theta (x)[\theta(a), \theta(y)])\\
	&= \Tr (\theta(a)\theta(x)\theta(y) - (-1)^{\overline{a}(\overline{x}+ \overline{y})}\theta(x)\theta(y)\theta(a))\\
	& =\Tr ([\theta(a), \theta(x)\theta(y)]).
	\end{align*}
	 Notice that if $\overline{a} \neq 0$ then $a$ belongs to $\LL$, so that $\theta(a)$ belongs to $\HOM^*_{\Oh_X}(\sE, \sE)$ and it is clear that $\Tr ([\theta(a), \theta(x)\theta(y)])=0$, by the properties of the trace map.
	 
	The only remaining non-trivial case is when $\overline{a}= \overline{x}+ \overline{y}=0$. Let $\{e_i^k\}$ with $i=1, \cdots , n_k$ be a local basis of $\LL^k$ and let
	\[ \theta(a)(e_i^k)= \sum_j A^k_{ij}e_j^k,\quad \theta(x)\theta(y)(e_i^k)= \sum_j B^k_{ij}e_j^k,\quad A^k_{ij}, B^k_{ij} \in \Oh_X,\]
	then
	\begin{align*}
	[\theta(a), \theta(x)\theta(y)](e_i^k) &= \theta(a)\theta(x)\theta(y)(e_i^k) - \theta(x)\theta(y)\theta(a)(e_i^k) \\
	&=\theta(a)\big(\sum_j B^k_{ij}e_j^k\big)- \theta(x)\theta(y)\big(\sum_j A^k_{ij}e_j^k\big)\\
	&= \sum_j B^k_{ij} \theta(a)(e_j^k) + \sum_j (\alpha \circ \theta) (a)(B^k_{ij}) e_j^k - \sum_{j,s} A^k_{ij}B^k_{js}e_s^k\\
	& =\sum_{j,s} B^k_{ij} A^k_{js}e_s^k + \sum_j \rho (a)(B^k_{ij}) e_j^k - \sum_{j,s} A^k_{ij}B^k_{js}e_s^k.
	\end{align*}
	The trace of $[\theta(a), \theta(x)\theta(y)]$ is hence equal to
	\begin{align*} \sum_k (-1)^k \big( \sum_{j,i} B^k_{ij} A^k_{ji} + \sum_i \rho (a)(B^k_{ii})  - \sum_{j,i} A^k_{ij}B^k_{ji} \big)&= \sum_{k,i} (-1)^k \rho (a) (B^k_{ii})=\\
	\rho(a)\big( \sum_{k,i} (-1)^k B_{ii}^k \big) =\rho(a)\Tr(\theta(x)\theta(y))&= \rho (a) (\la x, y \ra_{\theta}).\end{align*}
\end{proof}

 For every $i \geq 0$, let $ \Omega_X^i[-i]$ denote the sheaf $\Omega_X^i$ seen as a trivial complex concentrated in degree $i$.
Any cyclic form $\langle -, - \rangle \colon \LL \times \LL  \to \Oh_X$ can be extended to a collection of $\Oh_X$-bilinear forms
\begin{equation*}
 \langle -, - \rangle \colon ( \Omega_X^i[-i] \otimes \LL ) \times  (\Omega_X^j[-j] \otimes \LL )\to \Omega_X^{i+j}[-i-j], \quad i,j \geq 0,
 \end{equation*}
according to the Koszul sign rule, by setting for $x,y \in \LL$, $\omega \in \Omega_X^i[-i]$, an $\eta \in \Omega_X^j[-j]$ 
$$\la  \omega  \otimes x,  \eta  \otimes y  \ra = (-1)^{\overline{x}j}  \omega \wedge \eta \la x , y \ra . $$
 It is immediate to see that this form  is cyclic in the sense that 
\[ \la [b, x], y\ra + (-1)^{\overline{b}\overline{x}} \la x, [b,y] \ra = (\Id \otimes \rho)(b) (\la x, y\ra) \quad \forall b \in \Omega_X^1[-1] \otimes \mathcal{A},\ \ \forall x,y \in  \mathcal{L},\]
 where the bracket is the one of Lemma~\ref{lem.operatoreagg'}, and the anchor map has been extended to $\Omega_X^1[-1]\otimes \mathcal{A}$ by setting $(\Id \otimes \rho) (\omega \otimes a) := \omega 	\otimes \rho (a)$.

\begin{definition}\label{def.prop-form}
	A cyclic form $ \langle -, - \rangle \colon  \LL \times  \LL  \to \Oh_X$ is
	\textbf{$d$-closed} if for all $z,w \in  \LL$
	\[ \langle dz, w \rangle + (-1)^{\overline{z}} \langle z, d w \rangle =0 .\]

\end{definition}

\begin{lemma}\label{lem.d-chiusura}
	For any DG-Lie algebroid representation $\theta \colon \mathcal{A} \to \sD^*(X, \sE)$ the induced cyclic form $ \langle -, - \rangle_\theta \colon   \LL  \times   \LL \to \Oh_X$ is $d$-closed.
\end{lemma}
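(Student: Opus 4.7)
The plan is to deduce $d$-closedness formally from three ingredients: compatibility of $\theta$ with differentials, the graded Leibniz rule on $\HOM^*_{\Oh_X}(\sE,\sE)$, and the graded cyclicity of the trace. Unpacking the definitions of $\la -,-\ra_\theta$ and of $d$-closedness, I need to show
$$ \Tr\big(\theta(dz)\,\theta(w)\big) + (-1)^{\bar z}\,\Tr\big(\theta(z)\,\theta(dw)\big) = 0 $$
for all homogeneous sections $z,w\in \LL$ (both terms vanish automatically unless $\bar z+\bar w = -1$, the only case producing a degree-$0$ element of $\Oh_X$).

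First I would use that $\theta$, being a morphism of DG-Lie algebroids, is in particular a morphism of complexes. Its restriction to $\LL$ takes values in the sub-DG-Lie algebra $\HOM^*_{\Oh_X}(\sE,\sE)\subset \sD^*(X,\sE)$, whose differential is the graded commutator $[\delta_\sE,-]$, as recalled in Example~\ref{ex.coppie}. Hence $\theta(dz) = [\delta_\sE,\theta(z)]$ and $\theta(dw) = [\delta_\sE,\theta(w)]$. Next, I would apply the graded Leibniz rule for $[\delta_\sE,-]$ with respect to composition to write
$$ [\delta_\sE,\,\theta(z)\theta(w)] \;=\; [\delta_\sE,\theta(z)]\,\theta(w) + (-1)^{\bar z}\,\theta(z)\,[\delta_\sE,\theta(w)]. $$

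Finally, I would apply $\Tr$ to both sides. The left-hand side is zero by graded cyclicity $\Tr(AB) = (-1)^{\bar A\,\bar B}\Tr(BA)$; equivalently, as recorded in Example~\ref{ex.ad fasci}, because $\Tr\colon \HOM^*_{\Oh_X}(\sE,\sE)\to \Oh_X$ is a morphism of complexes into $\Oh_X$ with trivial differential, hence annihilates every coboundary. The right-hand side, after substituting the identifications from the previous step and recalling $\la x,y\ra_\theta = \Tr(\theta(x)\theta(y))$, is exactly $\la dz, w\ra_\theta + (-1)^{\bar z}\la z, dw\ra_\theta$. Comparing the two sides yields the desired identity. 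The argument is entirely formal; there is no genuine obstacle, and the only thing requiring care is the Koszul-sign bookkeeping, which is routine.
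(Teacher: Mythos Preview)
Your proof is correct and follows essentially the same route as the paper's: both use that $\theta$ intertwines the differentials so that $\theta(dz)=[\delta_\sE,\theta(z)]$, apply the graded Leibniz rule to rewrite $\theta(dz)\theta(w)+(-1)^{\bar z}\theta(z)\theta(dw)$ as the commutator $[\delta_\sE,\theta(z)\theta(w)]$, and conclude by the vanishing of the trace on graded commutators.
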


\begin{proof}
Since  $ \theta|_{\LL} \colon \LL \to \HOM^*_{\Oh_X}(\sE,\sE)$ is a morphism of DG-Lie algebroids it commutes with differentials: for $x \in \LL$, $$\theta(dx)= d_{\HOM^*(\sE,\sE)} (\theta (x)) = [d_\sE, \theta (x)].$$
For $x,y$ sections of $\LL$
\begin{align*}
&\la dx, y \ra_{\theta}  + (-1)^{\overline{x}}\la x, dy \ra_{\theta} = \\
&\Tr (\theta(dx) \theta(y )+ (-1)^{\overline{x}} \theta(x)\theta(dy))= \Tr ([d_\sE, \theta (x)] \theta(y )+ (-1)^{\overline{x}} \theta(x)[d_\sE, \theta (y)]) = \\
&\Tr (d_\sE \theta(x)\theta(y)- (-1)^{\overline{x}+\overline{y}} \theta(x)\theta(y)d_\sE)= \Tr ([d_\sE, \theta(x)\theta(y)])=0.
\end{align*}
\end{proof}

It follows from the properties of the Thom-Whitney totalisation  functor $\Tot$ that
	every collection of cyclic forms $\la-,-\ra\colon (\Omega^i_X[-i] \otimes \LL) \times (\Omega^j_X[-j] \otimes \LL) \to \Omega^{i+j}_X[-i-j]$, with $i,j \geq 0$, induces a collection of $\K$-bilinear forms
	\[ \la-,-\ra \colon \Tot(\mathcal{U}, \Omega^i_X[-i] \otimes \LL) \times \Tot(\mathcal{U}, \Omega^j_X[-j] \otimes \LL) \to \Tot(\mathcal{U}, \Omega^{i+j}_X[-i-j]).\]
	Recalling Definition~\ref{def.tot}, 
	the required forms are induced componentwise by the restriction of $$ A_n \otimes \prod_{i_1 \cdots i_n} (\Omega^i_X[-i] \otimes \LL )(U_{i_1 \cdots i_n}) \times A_n \otimes \prod_{i_1 \cdots i_n} (\Omega^j_X[-j] \otimes \LL )(U_{i_1 \cdots i_n}) \to  A_n \otimes \prod_{i_1 \cdots i_n} \Omega^{i+j}_X[-i-j] (U_{i_1 \cdots i_n}),$$
	\[ \la \eta_n \otimes (x_{i_1 \cdots i_n}), \omega_n \otimes (y_{i_1 \cdots i_n}) \ra :=    \eta_n \omega_n ( \la (-1)^{\overline{\omega_n}\ \overline{(x_{i_1 \cdots i_n})}} x_{i_1 \cdots i_n}, y_{i_1 \cdots i_n} \ra),\]
	with $\eta_n, \omega_n$ in $A_n$, $x_{i_1 \cdots i_n}$ in $ (\Omega^i_X[-i] \otimes \LL )(U_{i_1 \cdots i_n})$ and $ y_{i_1 \cdots i_n}$ in  $ (\Omega^j_X[-j] \otimes \LL )(U_{i_1 \cdots i_n})$.
	
	\medskip
	
	Let $(\Omega_X^* = \oplus_p \Omega_X^p[-p], \dr)$ denote the de Rham complex.
	 In the following, when working with $\Tot(\mathcal{U}, \Omega_X^*)$  
	the differential is denoted by $d_{\Tot}$ if $\Omega_X^*= \oplus_p \Omega_X^p[-p]$  is considered as complex with trivial differential, and by $d_{\Tot} + \dr$ if  it is considered as a complex with the de Rham differential.

	\begin{lemma}\label{lem.formTot}
		The form induced on the totalisation by a cyclic form $\la-,-\ra\colon  \LL\times  \LL\to \Oh_X$ is  cyclic: for all $b \in \Tot(\sU,  \Omega^1_X[-1] \otimes\mathcal{A})$ and $z,w \in \Tot(\sU,  \mathcal{L})$ one has that
		\[ \la [b,z], w\ra + (-1)^{\overline{b} \overline{z}} \la z, [b,w] \ra = (\Id \otimes \rho)(b) (\la z,w \ra).\]

		Moreover if the form $\la-,-\ra\colon  \LL \times  \LL\to \Oh_X$ is $d$-closed  (Definition~\ref{def.prop-form}), then for the induced form, for
		 $z \in \Tot(\mathcal{U}, \Omega^i_X[-i]\otimes \LL)$  and $w \in \Tot(\mathcal{U}, \Omega^j_X[-j]\otimes \LL)$ we have that
		\[ \la d_{\Tot}z, w\ra + (-1)^{\overline{z}}\la z, d_{\Tot} w \ra = d_{\Tot} \la z,w \ra.\]
	The above condition will be called \textbf{$d_{\Tot}$-closure}.

	\end{lemma}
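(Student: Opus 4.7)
The plan is to reduce both statements to identities that can be checked on decomposable tensors of the form $\eta_n \otimes b$ (with $\eta_n \in A_n$ and $b$ a section over an intersection $U_{i_1\cdots i_n}$), and then to invoke the corresponding sheaf-level identities from the discussion preceding Definition~\ref{def.prop-form} and from Definition~\ref{def.prop-form} itself. Since the bracket on the totalisation (see \eqref{eq.BracketTot}) and the induced form are defined componentwise as restrictions of maps on $\prod_n A_n \otimes(\cdots)$, both identities are determined on such decomposable pieces, and the verification on a general element of $\Tot$ then follows by $\K$-linearity.

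For the cyclicity statement, I would take decomposable $b = \eta_n \otimes b_n$, $z = \phi_n \otimes z_n$, $w = \psi_n \otimes w_n$ with $b_n \in (\Omega^1_X[-1]\otimes \mathcal A)(U_{i_1\cdots i_n})$ and $z_n, w_n \in \LL(U_{i_1\cdots i_n})$, and expand the three terms of the desired equation using the explicit formulas for the bracket and the induced form. After gathering the Koszul signs coming from permuting $\phi_n$ and $\psi_n$ past homogeneous sheaf sections, each of the three terms factors as a fixed polynomial form in $A_n$ (namely $\eta_n \phi_n \psi_n$, up to sign) times one of the three terms in the sheaf-level cyclicity identity $\la [b_n, z_n], w_n\ra + (-1)^{\overline{b_n}\,\overline{z_n}} \la z_n, [b_n, w_n]\ra = (\Id\otimes \rho)(b_n)(\la z_n, w_n\ra)$, which is exactly the extended cyclicity recorded just above Definition~\ref{def.prop-form}. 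The result in $A_n \otimes \Omega_X^*$ then gives the required identity once the signs match.

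For $d_{\Tot}$-closure I would write $d_{\Tot} = d_A + d$ (where $d$ denotes the extension of the sheaf differential to $\Omega_X^i[-i] \otimes \LL$ by $d(\omega \otimes x) = (-1)^i \omega \otimes dx$) and check the Leibniz identity separately for $d_A$ and for $d$. The $d_A$-part reduces to the fact that $d_A$ is a graded derivation of the wedge product on $A_n$, since it acts only on the $A_n$-factor and the form pulls out polynomial forms multiplicatively. For the $d$-part, I would again work on decomposable tensors $z = \eta_n \otimes (\omega \otimes x_n)$ and $w = \phi_n \otimes (\omega' \otimes y_n)$: the derivation identity then reduces, after tracking the signs $(-1)^i$ and $(-1)^j$ and the Koszul sign from permuting $\phi_n$ past $\omega \otimes x_n$, to $\la d x_n, y_n\ra + (-1)^{\overline{x_n}}\la x_n, dy_n\ra = 0$, which is precisely the hypothesis that $\la -,-\ra$ is $d$-closed on $\LL$ (noting that $\la x_n, y_n\ra \in \Oh_X$ has trivial differential).

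The main obstacle is the sign bookkeeping: the form picks up Koszul signs both from the $A_n$-differential $d_A$ (and from permuting $A_n$-factors past sheaf-sections) and from the shift conventions $\Omega_X^i[-i]$, and these must conspire to cancel correctly. Once one convinces oneself that the explicit componentwise formula of the form is a graded algebra pairing in the $A_n$-variable and that the extended sheaf-level bracket and form satisfy the stated cyclicity and $d$-closure identities, the totalised versions follow by direct expansion; there is no new conceptual input beyond the sheaf-level identities and the definition of $\Tot$.
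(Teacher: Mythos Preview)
Your proposal is correct and follows essentially the same approach as the paper: both reduce to decomposable tensors in each simplicial degree, expand using the explicit componentwise formulas for the bracket and the induced form, and then invoke the sheaf-level cyclicity and $d$-closure identities. The only organizational difference is that for $d_{\Tot}$-closure you treat the $d_A$-part and the $d$-part separately, whereas the paper expands $d_{\Tot}(\eta_n\otimes x_n)=d_{A_n}\eta_n\otimes x_n+(-1)^{\overline{\eta_n}}\eta_n\otimes dx_n$ in one go and then observes that the $d$-terms cancel by $d$-closure while the $d_{A_n}$-terms assemble into $d_{\Tot}\la z,w\ra$; this is purely a matter of bookkeeping and not a substantive divergence.
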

	
	\begin{proof}
 		For the first item, since everything is defined componentwise, it suffices to prove that for  every $a \in A_n \otimes \prod  (\Omega_X^1[-1]\otimes \mathcal{A})(U_{i_1 \cdots i_n})$ and every $x, y \in A_n \otimes \prod \LL (U_{i_1 \cdots i_n})$ 
		\[\la [a, x], y \ra + (-1)^{\overline {a}\ \overline{x}}\la x, [a, y]\ra =  (\Id \otimes \rho)(a)(\la x, y \ra)\] for every $n\geq 0$ .
			By linearity let $a =  \omega_{n} \otimes z_n $, with $\omega_{n} \in A_n$ and $z_n \in \prod (\Omega_X^1[-1]\otimes \mathcal{A}) (U_{i_1 \cdots i_n})$;
		let $x = \eta_n \otimes x_n$ and $y = \phi_n \otimes y_n$, with $\eta_n, \phi_n $ in $A_n$ and $ x_n, y_n$  in $\prod \LL (U_{i_1 \cdots i_n})$.
		Then
		\begin{align*}
		&\la [a, x], y \ra + (-1)^{\overline{a}\ \overline{x}}\la x, [a, y]\ra = \\
		&  \la [ \omega_{n} \otimes z_n,\eta_n \otimes x_n],\phi_n \otimes y_n  \ra +(-1)^{\overline{a}\ \overline{x}} \la \eta_n \otimes x_n, [ \omega_{n} \otimes z_n, \phi_n \otimes y_n]\ra = \\
		& (-1)^{\overline{\eta_n}\ \overline{z_n}}\la \omega_n \eta_n \otimes [z_n, x_n] , \phi_n \otimes y_n \ra
		+ (-1)^{\overline{a}\ \overline{x}} \la \eta_n \otimes x_n, (-1)^{\overline{\phi_n} \overline{z_n}} \omega_{n}\phi_n \otimes [z_n, y_n]  \ra =\\
		& (-1)^{\overline{\phi_n} (\overline{z_n} +\overline{x_n}) + \overline{\eta_n}\ \overline{z_n}} \big(\omega_{n} \eta_n \phi_n   \la [z_n, x_n], y_n \ra  +(-1)^{\overline{\omega_n}\ \overline{\eta_n}+  \overline{z_n}\ \overline{x_n}} \eta_n \omega_n \phi_n  \la x_n, [z_n, y_n] \ra \big) = \\
		& (-1)^{\overline{\phi_n}(\overline{z_n} + \overline{x_n}) + \overline{\eta_n}\ \overline{z_n}}\omega_n \eta_n \phi_n  \big( \la [z_n, x_n], y_n \ra +(-1)^{\overline{x_n}\ \overline{z_n}} \la x_n, [z_n, y_n]\ra  \big)  =\\
		&  (-1)^{\overline{\phi_n}(\overline{z_n} + \overline{x_n}) + \overline{\eta_n}\ \overline{z_n}}\omega_n \eta_n \phi_n    (\Id \otimes\rho) (z_n)( \la x_n, y_n\ra)  =  (\Id \otimes\rho) (a)(\la x, y \ra) .
		\end{align*}

		For the second item, recall that $d_{\Tot}$ is the differential on  $\Tot(\mathcal{U}, \Omega^*_X )$ when considering $\Omega^*_X$ as a complex with trivial differential. Again, since everything is defined componentwise, it is sufficient to prove that
		\[ \la d_{\Tot}(\eta_n \otimes x_n), \omega_n \otimes y_n \ra + (-1)^{\overline{x_n}+ \overline{\eta_n}}\la \eta_n \otimes x_n, d_{\Tot} ( \omega_n \otimes y_n) \ra = d_{\Tot} \la \eta_n \otimes x_n, \omega_n \otimes y_n \ra,\]
	for $\eta_n, \omega_n \in A_n$ and $x_n \in \prod(\Omega^i_X[-i] \otimes \LL )(U_{i_1 \cdots i_n})$ and $ y_n\in \prod(\Omega^j_X[-j] \otimes \LL )(U_{i_1 \cdots i_n})$. Then 
		\begin{align*}
		&\la d_{\Tot}(\eta_n \otimes x_n), \omega_n \otimes y_n \ra + (-1)^{\overline{x_n}+ \overline{\eta_n}}\la \eta_n \otimes x_n, d_{\Tot} ( \omega_n \otimes y_n) \ra=\\
		& \la d_{A_n}\eta_n \otimes x_n + (-1)^{\overline{\eta_n}} \eta_n \otimes dx_n, \omega_n \otimes y_n \ra + (-1)^{\overline{x_n}+ \overline{\eta_n}}\la \eta_n \otimes x_n, d_{A_n}  \omega_n \otimes y_n + (-1)^{\overline{\omega_n}}  \omega_n \otimes dy_n\ra\\
		&=(-1)^{\overline{\omega_n}\ \overline{x_n}} d_{A_n}(\eta_n) \omega_n \la x_n, y_n \ra + (-1)^{\overline{\eta_n} + \overline{\omega_n} (\overline{x_n} + 1)} \eta_n \omega_n \la dx_n, y_n \ra  + \\
		&+(-1)^{\overline{x_n}+ \overline{\eta_n}+ (\overline{\omega_n} +1) \overline{x_n}} \eta_n d_{A_n} (\omega_n) \la x_n, y_n \ra  + (-1)^{\overline{x_n}+ \overline{\eta_n} + \overline{\omega_n} +\overline{\omega_n}\ \overline{x_n}} \eta_n \omega_n \la x_n, dy_n \ra \\
		&= (-1)^{\overline{\omega_n}\ \overline{x_n}} d_{A_n} (\eta_n \omega_n) \la x_n, y_n \ra + (-1)^{\overline{\eta_n} + \overline{\omega_n} (\overline{x_n} + 1)} \eta_n \omega_n \big( \la dx_n, y_n \ra + (-1)^{\overline{x_n}} \la x_n, dy_n \ra  \big )\\
		&=(-1)^{\overline{\omega_n}\ \overline{x_n}} d_{A_n} (\eta_n \omega_n) \la x_n, y_n \ra = d_{A_n} \la \eta_n \otimes x_n, \omega_n \otimes y_n \ra = d_{\Tot} \la \eta_n \otimes x_n, \omega_n \otimes y_n \ra,
		\end{align*}
		where $d_{A_n}$ denotes the differential on $A_n$,  the differential graded algebra of polynomial differential forms on the affine $n$-simplex.

	\end{proof}

	\begin{corollary}\label{cor.nabladeRham}
			Let $D$ in $\Tot(\sU, \Omega_X^1 [-1]\otimes \mathcal{A})$ be a simplicial lifting of the identity and let $$\na= [D, -,] \colon \Tot(\sU, \LL) \to   \Tot(\sU, \Omega_X^1[-1]\otimes \LL) $$ be its associated connection, as in Defintion~\ref{def.connessione}. Then for any cyclic form
			$$\la-,-\ra \colon \Tot(\mathcal{U}, \Omega^i_X[-i] \otimes \LL) \times \Tot(\mathcal{U}, \Omega^j_X[-j] \otimes \LL) \to \Tot(\mathcal{U}, \Omega^{i+j}_X[-i-j] ),\quad i,j \geq 0,$$ we have 
		\begin{equation*}
		\la \na (x), y \ra + (-1)^{\overline{x}} \la x, \na (y) \ra= \dr \la x, y \ra
		\end{equation*}
		for $x,y \in \Tot(\mathcal{U},  \LL) $.
	\end{corollary}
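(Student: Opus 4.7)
The plan is to obtain the identity as an immediate application of the cyclicity property of the induced form (Lemma~\ref{lem.formTot}), specialised to the element $b = D \in \Tot(\sU, \Omega_X^1[-1] \otimes \mathcal{A})$.

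First I would recall that by Definition~\ref{def.conn-simpl} the simplicial lifting $D$ has degree one, and by Definition~\ref{def.connessione} the connection is simply $\nabla(x) = [D, x]$, where the bracket is the one of \eqref{eq.BracketTot}. Next I would invoke the first statement of Lemma~\ref{lem.formTot} with $b = D$, $z = x$, $w = y$, which yields
\begin{equation*}
\la [D, x], y \ra + (-1)^{\overline{x}} \la x, [D, y] \ra = (\Id \otimes \rho)(D)(\la x, y \ra),
\end{equation*}
since $\overline{b} = 1$ makes the sign collapse to $(-1)^{\overline{x}}$. Substituting the definition of $\nabla$ on the left-hand side gives the two terms appearing in the statement.

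To finish it remains to identify the right-hand side with $\dr \la x, y \ra$. This is precisely the content of Remark~\ref{rem.rho-deRham}: under the isomorphism
\[ \Omega_X^1[-1] \otimes \Theta_X \;=\; \Omega_X^1[-1] \otimes \DER_{\K}(\Oh_X, \Oh_X) \;\cong\; \DER^*_{\K}(\Oh_X, \Omega_X^1[-1]), \]
the element $(\Id \otimes \rho)(D) = \Id_{\Omega^1}$ corresponds to the universal derivation $\dr$, viewed as a degree-one element of $\Tot(\sU, \DER^*_{\K}(\Oh_X, \Omega_X^1[-1]))$ via the canonical inclusion of global sections into the totalisation (Example~\ref{ex.cech}). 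Applying this derivation to $\la x, y \ra \in \Tot(\sU, \Oh_X)$ produces $\dr \la x, y \ra \in \Tot(\sU, \Omega_X^1[-1])$, which is the desired right-hand side.

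The only subtlety is a bookkeeping check that the action of $(\Id \otimes \rho)(D)$ on $\la x, y \ra$ prescribed by Lemma~\ref{lem.formTot} (which is defined componentwise as the extension of the anchor-action $\rho(a)(f)$ on $A_n \otimes \prod \Oh_X(U_{i_1 \cdots i_n})$) indeed agrees with the totalisation-level de Rham differential $\dr$. This follows from the componentwise definition of both operators and the identification in Remark~\ref{rem.rho-deRham}; no additional calculation beyond unwinding definitions is required, so the statement follows directly.
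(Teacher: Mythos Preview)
Your proof is correct and follows exactly the same approach as the paper: the paper's proof reads in its entirety ``It follows from the cyclicity of the form and by Remark~\ref{rem.rho-deRham}'', and you have simply spelled out how these two ingredients combine. The additional bookkeeping paragraph you include is a reasonable expansion but not strictly necessary.
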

\begin{proof}
	It follows from the cyclicity of the form and by Remark~\ref{rem.rho-deRham}.
\end{proof}

\medspace

The next part is dedicated to defining an  $L_\infty$ morphism associated to a connection and to a $d_{\Tot}$-closed cyclic form on a transitive DG-Lie algebroid.
We assume that the reader is familiar with the notions and basic properties of DG-Lie algebras and
$L_{\infty}$ morphisms between them; details can be found in \cite{getzler04,K,ManRendiconti,LMDT} and in the references therein. The definition of an
$L_{\infty}$ morphism between a DG-Lie algebra and an abelian DG-Lie algebra, i.e., a DG-Lie algebra with trivial bracket is recalled here, because it will be 
needed for explicit calculations.

Let $V$ be a graded vector space over a field of characteristic 0. Let $\sigma$ be a permutation of $\{1,\ldots,n\}$, and let $v_1,\ldots,v_n$ be
homogeneous  vectors  of $V$: denote by
$\chi(\sigma;v_1,\ldots,v_n)=\pm 1$ the antisymmetric Koszul sign, defined by the relation 
\[ v_{\sigma(1)}\wedge\cdots\wedge v_{\sigma(n)}=\chi(\sigma;v_1,\ldots,v_n)\,  
v_{1}\wedge\cdots\wedge v_{n}\,\]
in the $n$th exterior power $V^{\wedge n}$. If the vectors $v_1,\ldots,v_n$ are clear from the context we will write $\chi(\sigma)$ instead of 
$\chi(\sigma;v_1,\ldots,v_n)$. Given two non-negative integers $p$ and $q$,  $S(p,q)$  denotes the set of $(p,q)$-shuffles, the permutations $\sigma$ of the set $\{1, \cdots, p+q\}$ such that 
\[ \sigma(1) < \sigma(2) <\cdots < \sigma(p); \quad \sigma(p+1) < \cdots < \sigma(p+q).\] Recall that the cardinality of $S(p,q)$ is $\binom{p+q}{p}$.
Because of the universal property of wedge powers, every linear map
$V^{\wedge p}\to W$ will be interpreted as a graded skew-symmetric $p$-linear map $V\times\cdots\times V\to W$. 

	\begin{definition}\label{def.morfismo}
	Let $(V,\delta,[-,-])$ be a DG-Lie algebra and $(M,d)$ an abelian DG-Lie algebra. An $L_{\infty}$ morphism $g\colon V\to M$ is a sequence of maps 
	$g_n\colon V^{\wedge n}\to M$, $n\ge 1$, with $g_n$ of degree $1-n$ such that, for every $n$ and every 
	$v_1,\ldots,v_n\in V$ homogeneous, the following conditions $C_i$ are satisfied for all $i \in \N$:
	\[ (C_1):\quad  dg_1(v_1) = g_1(\delta v_1),\]
	\begin{equation*}\label{equ.conditiolinfty}
	\begin{split}
	(C_n): \quad dg_n(v_1,\ldots,v_n)&=
	(-1)^{1-n}\sum_{\sigma\in S(1,n-1)}\chi(\sigma)g_{n}(\delta v_{\sigma(1)},v_{\sigma(2)},\ldots,v_{\sigma(n)})\\
	&\quad +(-1)^{2-n}\sum_{\sigma\in S(2,n-2)}\chi(\sigma)g_{n-1}([v_{\sigma(1)},v_{\sigma(2)}],v_{\sigma(3)},\ldots,v_{\sigma(n)}).\end{split}\end{equation*}
\end{definition}

	\begin{remark}\label{rem.primacomp}
		Notice that condition $C_1$ entails that the linear component $g_1$ induces a map in cohomology $g_1 \colon H^*(V) \to H^*(M)$. It is clear that the cohomology $H^*(M)$ of an abelian DG-Lie algebra $M$ is an abelian graded Lie algebra. Condition $C_2$ can be written as
		\[ g_1([v_1, v_2]) = dg_2(v_1, v_2) + g_2 (\delta v_1, v_2) + (-1)^{\overline{v_1}}g_2 (v_1, \delta v_2),\]
		which implies that the map induced by $g_1$ in cohomology is a morphism of graded Lie algebras.
			\end{remark}

	Recall that since the functor $\Tot$ sends semicosimplicial DG-Lie algebras to DG-Lie algebras, the complex $\Tot(\sU, \LL)$ is a DG-Lie algebra. The complex of $\Oh_X$-modules $\Omega_X^{\leq 1} = \Oh_X \xrightarrow{\dr} \Omega_X^1$ can be considered as a sheaf of abelian DG-Lie algebras, and hence it gives rise to a semicosimplicial abelian DG-Lie algebra; therefore the complex $\Tot(\mathcal{U}, \Omega_X^{\leq 1} [2])$ is an abelian DG-Lie algebra.

	\begin{theorem}\label{teo.linfinitoalg}
	Let $(\mathcal{A}, \rho)$ be a transitive DG-Lie algebroid over a smooth separated scheme $X$ of finite type over a field $\K$ of characteristic zero. Let $\LL = \Ker\rho$ be a finite complex of locally free sheaves and let $\la-,-\ra \colon \Tot(\mathcal{U}, \Omega^i_X[-i] \otimes \LL) \times \Tot(\mathcal{U}, \Omega^j_X[-j] \otimes \LL) \to \Tot(\mathcal{U}, \Omega^{i+j}_X[-i-j] )$, $i,j \geq 0$, be a cyclic form which is $d_{\Tot}$-closed.
	For every simplicial lifting of the identity $D \in \Tot(\mathcal{U}, \Omega_X^1[-1] \otimes \mathcal{A})$ there exists a $L_\infty$ morphism between DG-Lie algebras on the field $\K$
	\[ f \colon \Tot(\mathcal{U}, \LL) \rightsquigarrow\Tot(\mathcal{U}, \Omega_X^{\leq 1} [2]) \]
	with components
	\begin{align*}
	f_1 (x) &= \langle u, x \rangle,\\
	f_2 (x,y) &= \frac{1}{2} \big( \la \nabla(x), y \ra - (-1)^{\overline{x}\ \overline{y}} \la \nabla (y), x \ra \big),\\
	f_3(x,y,z)  &=-\frac{1}{2} \la x, [y,z] \ra,\\
	f_n &= 0 \ \forall n \geq 4,
	\end{align*}
	where $\na= [D,-] \colon \Tot(\mathcal{U}, \LL) \to \Tot(\mathcal{U}, \Omega_X^1[-1] \otimes \LL)$ denotes the connection associated to the simplicial lifting of the identity $D $, and $u=d_{\Tot} D$ its extension cocycle. 
\end{theorem}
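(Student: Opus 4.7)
The plan is to verify the $L_\infty$ compatibility conditions $(C_n)$ of Definition~\ref{def.morfismo} for every $n \geq 1$. Since $f_n = 0$ for $n \geq 4$, condition $(C_n)$ with $n \geq 5$ reduces to $0 = 0$ as both sums on the right-hand side involve only vanishing components. Condition $(C_4)$, after the $f_4$-terms are discarded, becomes the purely algebraic identity
\[ \sum_{\sigma \in S(2,2)} \chi(\sigma) \, \la [v_{\sigma(1)}, v_{\sigma(2)}], [v_{\sigma(3)}, v_{\sigma(4)}] \ra = 0, \]
which follows from extended cyclicity $\la [a, b], c\ra = \la a, [b, c]\ra$ combined with graded Jacobi for the bracket on $\LL$: each pairing $\la [v_{\sigma(1)}, v_{\sigma(2)}], [v_{\sigma(3)}, v_{\sigma(4)}]\ra$ becomes $\la v_{\sigma(1)}, [v_{\sigma(2)}, [v_{\sigma(3)}, v_{\sigma(4)}]]\ra$, and the shuffle sum reassembles into a Jacobi expression, which vanishes. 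Only $(C_1)$, $(C_2)$, $(C_3)$ require real computation; throughout I must track that the target differential on $\Tot(\sU, \Omega_X^{\leq 1}[2])$ is $d_{\Tot}+\dr$, with $\dr$ nonzero only on the $\Oh_X$-summand.

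For $(C_1)$, the value $f_1(x) = \la u, x\ra$ lies in $\Tot(\sU, \Omega_X^1[-1])$ so $\dr$ annihilates it, and $d_{\Tot}\la u, x\ra = \la u, d_{\Tot} x\ra = f_1(d_{\Tot} x)$ by $d_{\Tot}$-closure (Lemma~\ref{lem.formTot}) together with $d_{\Tot} u = 0$. For $(C_2)$, the key tool is Lemma~\ref{lem.diff e nabla}, rewritten as $d_{\Tot}\na(x) = [u, x] - \na(d_{\Tot} x)$. I would expand $d_{\Tot} f_2(x, y)$ by applying $d_{\Tot}$-closure to each pairing $\la \na x, y\ra$ and $\la \na y, x\ra$, then substitute this identity to replace $d_{\Tot}\na$ by $[u, \cdot] - \na(d_{\Tot}\cdot)$, and finally use the extended cyclicity $\la [u, x], y\ra = \la u, [x, y]\ra$ (valid because $u \in \Tot(\sU, \Omega_X^1[-1]\otimes \LL)$ has vanishing anchor image) to produce $f_1([x, y])$ terms. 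Two such terms appear and combine with the coefficient $\tfrac{1}{2}$ to give exactly one copy of $f_1([x, y])$; the remaining terms assemble into $-f_2(d_{\Tot} x, y) + (-1)^{\overline{x}\overline{y}}f_2(d_{\Tot} y, x)$, matching the right-hand side of $(C_2)$. Note that $\dr f_2 = 0$ here, since $f_2$ also lands in $\Tot(\sU, \Omega_X^1[-1])$.

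For $(C_3)$, the situation is genuinely more intricate because $f_3$ takes values in $\Tot(\sU, \Oh_X)$, and the target differential contributes a nontrivial $\dr f_3$ term; this is precisely where Corollary~\ref{cor.nabladeRham} enters, converting $\la \na x, [y, z]\ra + (-1)^{\overline{x}}\la x, \na[y, z]\ra$ into $\dr\la x, [y, z]\ra$. The right-hand side of $(C_3)$ contains $f_2$ evaluated on brackets, so I would invoke the Leibniz rule $\na[x, y] = [\na x, y] + (-1)^{\overline{x}}[x, \na y]$, an immediate consequence of graded Jacobi applied to $[D, [x, y]]$ on $\mathcal{A}$, together with the analogous Leibniz rule for $d_{\Tot}$ on the bracket. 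Repeated use of $d_{\Tot}$-closure, extended cyclicity, and these Leibniz identities reduces both sides to a common sum of pairings $\la v_{\sigma(i)}, [\cdots]\ra$ that match after summing over $S(1,2)$ and $S(2,1)$ with the appropriate Koszul signs.

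The main obstacle throughout $(C_2)$ and $(C_3)$ is the careful bookkeeping of Koszul signs coming from the shuffle permutations, from the graded skew-symmetry of $f_2$ and $f_3$, from the graded symmetry of the cyclic form, and from the shift in the target complex $\Omega_X^{\leq 1}[2]$; the coefficients $\tfrac{1}{2}$ in $f_2$ and $f_3$ are precisely chosen so that the antisymmetrization built into the shuffle formula combines with these signs to produce matching cancellations, without any further structure being required.
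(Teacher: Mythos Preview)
Your proposal is correct and follows essentially the same approach as the paper: both verify conditions $(C_1)$--$(C_4)$ directly, using $d_{\Tot}$-closure of the form for $(C_1)$, Lemma~\ref{lem.diff e nabla} and cyclicity for $(C_2)$, Corollary~\ref{cor.nabladeRham} together with the Leibniz rule $\nabla[x,y]=[\nabla x,y]+(-1)^{\overline{x}}[x,\nabla y]$ for $(C_3)$, and graded Jacobi for $(C_4)$. The only cosmetic difference is that in $(C_2)$ the paper starts from $f_2(d_{\Tot}x,y)+(-1)^{\overline{x}}f_2(x,d_{\Tot}y)$ and works toward $f_1([x,y])-d_{\Tot}f_2(x,y)$, whereas you begin by expanding $d_{\Tot}f_2(x,y)$; the two computations are the same identity read from opposite ends.
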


\begin{proof}
	The strategy of the proof it to check that the conditions $C_i$ of Definition~\ref{def.morfismo} hold for $n=1,2,3,4$. In fact, since $f_n=0$ for  $n\geq 4$, the conditions are automatically satisfied for $n \geq 5$.
	
	\begin{sloppypar}
		Denote by $d_{\Tot}$ the differential on $\Tot(\mathcal{U}, \LL)$, and by $d_{\Tot} + \dr$ the differential on $\Tot(\mathcal{U}, \Omega_X^{\leq 1} [2])$.
	Condition $C_1$ requires that $$f_1 (d_{\Tot}x) =(d_{\Tot} + \dr)f_1 (x);$$ notice however that since $u$ belongs to $\Tot(\mathcal{U}, \Omega_X^1[-1] \otimes \LL)$, $(d_{\Tot} + \dr)f_1 = d_{\Tot} f_1$ in $\Tot(\mathcal{U}, \Omega_X^{\leq 1} [2])$. Then by the $d_{\Tot}$-closure of the cyclic form and by the fact that $u$ is  closed:
	\end{sloppypar}
	\[ f_1 (d_{\Tot}x) = \la u, d_{\Tot}x \ra = (-1)^{\overline{u}}d_{\Tot}\la u,x \ra - (-1)^{\overline{u}}\la d_{\Tot}u, x \ra =d_{\Tot}\la u,x \ra =  d_{\Tot} f_1(x).\]
	
	For $n=2$ the condition is 
	\[ (C_2): \quad f_2 (d_{\Tot}x, y) + (-1)^{\overline{x}} f_2 (x, d_{\Tot}y) = f_1 ([x,y])-(d_{\Tot} + \dr)f_2(x,y).\]
	By definition of $f_2$ we have again that $(d_{\Tot} + \dr)f_2 = d_{\Tot} f_2$, and then, using Lemma~\ref{lem.diff e nabla},
	\begin{align*}
	&f_2 (d_{\Tot}x, y) + (-1)^{\overline{x}} f_2 (x, d_{\Tot}y) =\\
	& \frac{1}{2} \big( \la \nabla(d_{\Tot}x) , y \ra - (-1)^{(\overline{x} + 1)\overline{y}} \la \na(y) , d_{\Tot}x \ra + (-1)^{\overline{x}} \la \na (x), d_{\Tot}y \ra - (-1)^{\overline{x}\ \overline{y}} \la \na (d_{\Tot}y), x \ra \big)=\\
	& \frac{1}{2} \big( - \la d_{\Tot}\nabla(x) , y \ra  + \la [u,x], y \ra - (-1)^{(\overline{x} + 1)\overline{y}} \la \na(y) , d_{\Tot}x \ra + (-1)^{\overline{x}} \la \na (x), d_{\Tot}y \ra +\\
	&\qquad+ (-1)^{\overline{x}\ \overline{y}} \la d_{\Tot}\na (y), x \ra -(-1)^{\overline{x}\ \overline{y}}  \la [u,y], x \ra\big)= \\
	&\frac{1}{2} \big( \la u,[x,y] \ra - (-1)^{\overline{x}\ \overline{y}} \la u, [y, x] \ra -d_{\Tot} \la \na(x), y \ra + (-1)^{\overline{x}\ \overline{y}} d_{\Tot} \la \na (y), x \ra \big)  =\\
	& \la u, [x,y] \ra -\frac{1}{2} d_{\Tot} (\la \na (x), y \ra - (-1)^{\overline{x}\ \overline{y}} \la \na(y), x \ra ) = f_1 ([x,y])- d_{\Tot}f_2(x,y).
	\end{align*}
	
	Condition $C_3$ is the following
	\begin{align*}
	(d_{\Tot} +\dr) f_3 (x,y,z) &= f_3 (d_{\Tot}x,y,z) - (-1)^{\overline{x}\ \overline{y}} f_3 (d_{\Tot}y, x, z) + (-1)^{\overline{z}(\overline{x} + \overline{y})} f_3 (d_{\Tot}z, x, y) +\\
	&- f_2 ([x,y], z) 
	+ (-1)^{\overline{y}\ \overline{z}} f_2 ([x, z], y)  -(-1)^{\overline{x} (\overline{y} + \overline{z})} f_2 ([y, z], x),
	\end{align*}
	and we begin by noting that by the $d_{\Tot}$-closure
	\begin{align*}
	&f_3 (d_{\Tot}x,y,z) - (-1)^{\overline{x}\ \overline{y}} f_3 (d_{\Tot}y, x, z) + (-1)^{\overline{z}(\overline{x} + \overline{y})} f_3 (d_{\Tot}z, x, y)=\\
	& -\frac{1}{2} \big( \la d_{\Tot}x, [y,z] \ra - (-1)^{\overline{x}\ \overline{y}}  \la d_{\Tot}y, [x, z] \ra + (-1)^{\overline{z}(\overline{x} + \overline{y})} \la d_{\Tot}z, [x,y] \ra  \big)=\\
	& -\frac{1}{2} \big( \la d_{\Tot}x, [y,z] \ra - (-1)^{\overline{x}\ \overline{y}}  \la [d_{\Tot}y, x], z \ra + (-1)^{\overline{x} + \overline{y}} \la  [x,y] , d_{\Tot}z \ra  \big)=\\
	& -\frac{1}{2} \big( \la d_{\Tot}x, [y,z] \ra + (-1)^{\overline{x}}  \la [x, d_{\Tot}y], z \ra + (-1)^{\overline{x} + \overline{y}} \la  x,[y , d_{\Tot}z] \ra  \big)=\\
	& -\frac{1}{2} \big( \la d_{\Tot}x, [y,z] \ra + (-1)^{\overline{x}} \la x, d_{\Tot}[y,z] \ra \big) =-\frac{1}{2} d_{\Tot} \la x, [y,z] \ra = d_{\Tot} f_3(x,y,z).
	\end{align*}
	On the other hand, by Corollary~\ref{cor.nabladeRham}
	\begin{align*}
	&- f_2 ([x,y], z) + (-1)^{\overline{y}\ \overline{z}} f_2 ([x, z], y)  -(-1)^{\overline{x} (\overline{y} + \overline{z})} f_2 ([y, z], x)= \\
	& -\frac{1}{2} \big( \la \na ([x,y]), z \ra - (-1)^{\overline{z}(\overline{x} + \overline{y})} \la \na (z), [x,y] \ra - (-1)^{\overline{y}\ \overline{z}} \la \na ([x, z]), y \ra + (-1)^{\overline{x}\ \overline{y}}  \la \na (y), [x, z] \ra +\\
	&\qquad + (-1)^{\overline{x} (\overline{y} + \overline{z})} \la \na ([y,z]), x \ra - \la \na(x), [y,z] \ra \big) = \\
	& -\frac{1}{2} \big( \la  [\na(x),y], z \ra +(-1)^{\overline{x}} \la  [x,\na(y)], z \ra - (-1)^{\overline{z}(\overline{x} + \overline{y})} \la \na (z), [x,y] \ra - (-1)^{\overline{y}\ \overline{z}} \la [\na(x), z], y \ra +\\
	& \qquad- (-1)^{\overline{y}\ \overline{z}+\overline{x}} \la [x, \na (z)], y \ra+ (-1)^{\overline{x}\ \overline{y}}  \la \na (y), [x, z] \ra + (-1)^{\overline{x} (\overline{y} + \overline{z})} \la [\na(y),z], x \ra+ \\
	&\qquad+ (-1)^{\overline{x} (\overline{y} + \overline{z}) + \overline{y}} \la [y,\na(z)], x \ra- \la \na(x), [y,z] \ra \big) = \\ 
	& -\frac{1}{2} \big( \la \na(x), [y,z] \ra + (-1)^{\overline{x}} \la x, \na ([y,z]) \ra \big) = -\frac{1}{2} \dr \la x, [y,z] \ra = \dr f_3 (x,y,z).
	\end{align*}

	Lastly, condition $C_4$ is
	\begin{align*}
	&f_3 ([a_1, a_2], a_3, a_4) - (-1)^{\overline{a_2}\ \overline{a_3}} f_3 ([a_1, a_3], a_2, a_4 ) + (-1)^{\overline{a_4}(\overline{a_2} + \overline{a_3})} f_3 ([a_1, a_4], a_2, a_3)\\
	&\qquad + (-1)^{\overline{a_1}(\overline{a_2} + \overline{a_3})} f_3 ([a_2, a_3], a_1, a_4) - (-1)^{\overline{a_3}\ \overline{a_4} + \overline{a_1}\ \overline{a_2} + \overline{a_1}\ \overline{a_4}} f_3 ([a_2, a_4], a_1, a_3)  \\
	&\qquad +(-1)^{(\overline{a_1} + \overline{a_2})(\overline{a_3}+ \overline{a_4})}f_3 ([a_3, a_4], a_1, a_2)=0.
	\end{align*}
	By the graded Jacobi identity we have that 
	\begin{align*}
	&-\frac{1}{2}\big(\langle [a_1, a_2], [a_3, a_4] \rangle - (-1)^{\overline{a_2}\ \overline{a_3}} \langle [a_1, a_3], [a_2, a_4 ] \rangle  + (-1)^{\overline{a_4}(\overline{a_2} + \overline{a_3})} \langle [a_1, a_4], [a_2, a_3] \rangle +\\
	&\qquad+ (-1)^{\overline{a_1}(\overline{a_2} + \overline{a_3})} \langle [a_2, a_3], [a_1, a_4]\rangle  - (-1)^{\overline{a_3}\ \overline{a_4} + \overline{a_1}\ \overline{a_2} + \overline{a_1}\ \overline{a_4}} \langle  [a_2, a_4], [a_1, a_3] \rangle   \\
	&\qquad +(-1)^{(\overline{a_1} + \overline{a_2})(\overline{a_3}+ \overline{a_4})}\langle [a_3, a_4],[a_1, a_2]\rangle \big)= \\
	&-(\langle [a_1, a_2], [a_3, a_4] \rangle - (-1)^{\overline{a_2}\ \overline{a_3}} \langle [a_1, a_3], [a_2, a_4 ] \rangle + (-1)^{\overline{a_4}(\overline{a_2} + \overline{a_3})} \langle [a_1, a_4], [a_2, a_3] \rangle)=\\
	& -( \langle a_1, [a_2, [a_3, a_4]] \rangle - (-1)^{\overline{a_2}\ \overline{a_3}} \langle a_1, [a_3, [a_2, a_4 ]] \rangle + (-1)^{\overline{a_4}(\overline{a_2} + \overline{a_3})} \langle a_1, [a_4, [a_2, a_3]] \rangle )=\\
	&  -\langle a_1, [a_2, [a_3, a_4]] -(-1)^{\overline{a_2}\ \overline{a_3}} [a_3, [a_2, a_4 ]] -  [[a_2, a_3], a_4]\rangle =0\,.
	\end{align*}
\end{proof}

\medspace

We can now state the results of  \cite{linfsemireg} for a coherent sheaf admitting a finite locally free resolution on a smooth separated scheme $X$ of finite type on a field $\K$ of characteristic zero.

\begin{remark}
	It is not very restrictive to require that a coherent sheaf on $X$
	has a finite locally free resolution: in fact, by \cite[III, Exercises 6.8, 6.9]{Hart} every 
	 coherent sheaf on a smooth, Noetherian, integral, separated scheme admits a finite locally free resolution.
\end{remark}

Let $(\sE, d_{\sE})$ be a finite complex of locally free sheaves. 
Consider the DG-Lie algebroid of derivations of pairs $\sD^*(X, \sE)$ of Example~\ref{ex.coppie}, \cite{DMcoppie}, and the short exact sequence
\begin{equation*}
	\begin{tikzcd}
	0 \arrow[r] & {\HOM^*_{\Oh_X}(\sE,\sE)} \arrow[r] & {\sD^*(X, \sE)} \arrow[r, "\alpha"] & \Theta_X \arrow[r] & 0;
	\end{tikzcd}
\end{equation*}
it was noted in Example~\ref{ex.coppie} that by tensoring with $\Omega_X^1[-1]$ one obtains 
\[
0\to  \HOM^*_{\Oh_X}(\sE,\Omega^1_X[-1]\otimes \sE)\xrightarrow{g\mapsto (0,g)} \sJ^*_{\Omega^1}\xrightarrow{(\beta, g)\mapsto\beta} 
\DER_{\K}(\Oh_X,\Omega^1_X[-1])\to 0\,,
\]
Fixing an affine open cover $\sU$ of $X$ and applying the $\Tot$ functor, we get the short exact sequence
\[ 0\to  \Tot(\sU,\HOM_{\Oh_X}^*(\sE,\Omega^1_X [-1]\otimes \sE))\xrightarrow{\quad} 
\Tot(\sU,\mathcal{J}_{\Omega^1}^*)\xrightarrow{\quad } 
\Tot(\sU,\DER_{\K}(\Oh_X,\Omega^1_X[-1]))\to 0\,.\]
We have already remarked that a lifting of the identity in $\sJ^*_{\Omega^1}$ is equivalent to a global algebraic connection on every component $\sE^i$; hence a lifting to $\Tot(\sU,\mathcal{J}_{\Omega}^*)$ of the universal derivation $\dr \colon \Oh_X \to \Omega_X^1[-1] $ in $\Tot(\sU,\DER_{\K}(\Oh_X,\Omega^1_X[-1]))$  can be termed a \textbf{simplicial connection} on the complex of locally free sheaves $\sE$.
As seen in Example~\ref{ex.traccia}, a natural cyclic form to consider is the one induced by
\[\HOM^*_{\Oh_X}(\sE,\sE) \times\HOM^*_{\Oh_X}(\sE,\sE) \to \Oh_X, \quad (a, b) \mapsto -\Tr ( ab),\] where $\Tr \colon \HOM^*_{\Oh_X} (\sE,\sE) \to \Oh_X$ is the usual trace map. 
Then the $L_\infty$ morphism of Theorem~\ref{teo.linfinitoalg} yields :

\begin{corollary}\label{cor.linfBF} 
	Let $\sE$ be a finite complex of locally free sheaves on a smooth separated scheme $X$ of finite type over a field $\K$ of characteristic zero. For every simplicial connection $D \in \Tot(\sU, \sJ^*_{\Omega^1})$ 
	there exists an
	$L_\infty$  morphism between DG-Lie algebras on the field $\K$
	\[g\colon \Tot(\mathcal{U},\HOM^*_{\Oh_X} (\sE,\sE)) \rightsquigarrow \Tot(\mathcal{U}, \Omega_X^{\leq 1} [2])\] 
	with components 
	\begin{align*}
	g_1 (x) &= -\Tr (ux),\\
	g_2 (x,y) &= -\frac{1}{2} \Tr (\nabla(x) y - (-1)^{\overline{x}\ \overline{y}} \nabla (y) x ),\\
	g_3(x,y,z)  &=\frac{1}{2}\Tr (  x, [y,z] ),\\
	g_n &= 0 \ \forall n \geq 4.
	\end{align*}
\end{corollary}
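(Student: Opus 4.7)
The plan is to realise Corollary~\ref{cor.linfBF} as a direct specialisation of Theorem~\ref{teo.linfinitoalg} to the transitive DG-Lie algebroid $(\sD^*(X, \sE), \alpha)$ of Example~\ref{ex.coppie}, equipped with the bilinear form of Example~\ref{ex.traccia}. Once the hypotheses of the theorem are verified, the formulas for $g_1, g_2, g_3$ will follow by substituting the cyclic form $\la a, b \ra := -\Tr(ab)$ into the general formulas for $f_1, f_2, f_3$ produced by Theorem~\ref{teo.linfinitoalg}, and $g_n = 0$ for $n \geq 4$ will be automatic.

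First I would check that the hypotheses of Theorem~\ref{teo.linfinitoalg} apply. The DG-Lie algebroid $\sD^*(X,\sE)$ is transitive (Example~\ref{ex.coppie}), and its kernel $\HOM^*_{\Oh_X}(\sE,\sE)$ is a finite complex of locally free sheaves since $\sE$ is. Via the isomorphism $\Omega^1_X[-1] \otimes \sD^*(X,\sE) \cong \sJ^*_{\Omega^1}$ recalled in Example~\ref{ex.coppie}, a simplicial connection $D \in \Tot(\sU, \sJ^*_{\Omega^1})$ is precisely a simplicial lifting of the identity in $\Tot(\sU, \Omega^1_X[-1] \otimes \sD^*(X,\sE))$ in the sense of Definition~\ref{def.conn-simpl}; the lifting condition reduces, via Remark~\ref{rem.rho-deRham}, exactly to the requirement that $D$ maps to the universal derivation $\dr \in \Tot(\sU, \DER^*_\K(\Oh_X, \Omega^1_X[-1]))$.

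Next I would verify that $\la a, b \ra := -\Tr(ab)$ is cyclic and $d$-closed. Applying Lemma~\ref{lem.compat} to the identity representation $\theta = \Id \colon \sD^*(X,\sE) \to \sD^*(X,\sE)$ yields the cyclicity of the form $\Tr(xy)$ on $\HOM^*_{\Oh_X}(\sE,\sE)$, and multiplying by $-1$ preserves the cyclicity relation. Similarly, Lemma~\ref{lem.d-chiusura} applied to the identity representation gives the $d$-closure. Lemma~\ref{lem.formTot} then implies that the induced family of forms on the totalisations is cyclic and $d_{\Tot}$-closed, so all hypotheses of Theorem~\ref{teo.linfinitoalg} are satisfied.

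Finally, Theorem~\ref{teo.linfinitoalg} provides an $L_\infty$ morphism $g \colon \Tot(\sU, \HOM^*_{\Oh_X}(\sE,\sE)) \rightsquigarrow \Tot(\sU, \Omega_X^{\leq 1}[2])$ with components $f_n$ computed from the extension cocycle $u = d_{\Tot}D$, the connection $\na = [D,-]$, and the cyclic form. Substituting $\la a, b\ra = -\Tr(ab)$ directly into the expressions of Theorem~\ref{teo.linfinitoalg} gives $g_1(x) = \la u, x\ra = -\Tr(ux)$, $g_2(x,y) = -\tfrac{1}{2}\Tr(\na(x)y - (-1)^{\bar x\,\bar y}\na(y)x)$, $g_3(x,y,z) = -\tfrac{1}{2}\la x,[y,z]\ra = \tfrac{1}{2}\Tr(x[y,z])$, and $g_n = 0$ for $n \geq 4$. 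The only thing to watch is the bookkeeping of signs in this substitution, which is routine.
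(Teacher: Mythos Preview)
Your proposal is correct and follows exactly the approach of the paper: the corollary is obtained by specialising Theorem~\ref{teo.linfinitoalg} to the transitive DG-Lie algebroid $\sD^*(X,\sE)$ with the cyclic form $\la a,b\ra=-\Tr(ab)$ of Example~\ref{ex.traccia}, and then reading off the components. If anything, you spell out more carefully than the paper does why the hypotheses (cyclicity via Lemma~\ref{lem.compat} for the identity representation, $d$-closure via Lemma~\ref{lem.d-chiusura}, and $d_{\Tot}$-closure via Lemma~\ref{lem.formTot}) are satisfied.
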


Hence the applications to deformation theory of \cite{linfsemireg}, stated in the context of complex manifolds, are also valid in the algebraic context as announced.
Let $\sF$ be a coherent sheaf on $X$ admitting a finite locally free resolution, and denote by  
\[ \sigma=\sum_{q\ge 0}\sigma_q\colon \Ext^2_X(\sF,\sF)\to \prod_{q\ge 0}H^{q+2}(X,\Omega_X^q),\qquad 
\sigma(c)=\Tr(\exp(-\At(\sF))\circ c)\,,\] 
the Buchweitz-Flenner semiregularity map of \cite{BF}. Above, $\At(\sF) \in \Ext_X^1(\sF, \sF \otimes \Omega_X^1)$ denotes the Atiyah class of $\sF$, the exponential of its opposite 
\[ \exp(-\At(\sF))\in \prod_{q\ge 0}\Ext^q_X(\sF,\sF\otimes \Omega^q_X)\,\]
is obtained via the Yoneda pairing
\[ \Ext^i_X(\sF,\sF\otimes \Omega^i_X)\times \Ext^j_X(\sF,\sF\otimes \Omega^j_X)\to 
\Ext^{i+j}_X(\sF,\sF\otimes \Omega^{i+j}_X),\qquad (a,b)\mapsto a\circ b,\]
and $\Tr$ denotes the trace maps
\[ \Tr\colon \Ext_X^i(\sF,\sF\otimes \Omega^j_X)\to H^i(X,\Omega^j_X),\qquad i,j\ge 0\,.\]
For every $q \geq 0$ one can consider the composition 
\[  \tau_q\colon \Ext_X^2(\sF,\sF)\xrightarrow{\sigma_q}H^{q+2}(X,\Omega_X^q)=H^{2}(X,\Omega_X^q[q])\xrightarrow{i_q} \mathbb{H}^{2}(X,\Omega_X^{\le q}[2q]),\]
where $\Omega^{\le q}_X=(\oplus_{i=0}^q\Omega_X^i[-i],\dr)$ is the truncated  de Rham complex and $i_q$ is induced by the inclusion of complexes $\Omega_X^{q}[q]\subset \Omega_X^{\le q}[2q]$. The map $\tau_q$ is the { $q$-component of the modified Buchweitz-Flenner semiregularity} map. 
It is convenient to also consider the maps
\begin{align*}
&\sigma_q\colon \Ext^*_X(\sF,\sF)\to H^{*}(X,\Omega_X^q[q])\\
&\tau_q\colon \Ext_X^*(\sF,\sF) \xrightarrow{\sigma_q}H^{*}(X,\Omega_X^q[q])\xrightarrow{i_q} \mathbb{H}^{*}(X,\Omega_X^{\le q}[2q])
\end{align*}
defined by the same formulas.

\begin{corollary}\label{cor.fasci1} 
	Let $\sF$ be a coherent sheaf admitting a finite locally free resolution $\sE$ on a smooth separated scheme $X$ of finite type over a field $\K$ of characteristic zero. Then every simplicial connection on the resolution $\sE$ gives a lifting of the map
	\[  \tau_1\colon \Ext_X^*(\sF,\sF)\to \mathbb{H}^{*}(X,\Omega_X^{\le 1}[2])\]
	to an $L_{\infty}$ morphism 
	\[ g\colon \Tot(\mathcal{U},\HOM^*_{\Oh_X} (\sE,\sE)) \rightsquigarrow \Tot(\mathcal{U}, \Omega_X^{\leq 1} [2]).\]
\end{corollary}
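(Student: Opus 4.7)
The plan is to derive this statement as essentially a direct consequence of Corollary~\ref{cor.linfBF}, with the remaining work being the identification of the linear component $g_1$ in cohomology with the map $\tau_1$. First I would apply Corollary~\ref{cor.linfBF} to the finite complex of locally free sheaves $\sE$ resolving $\sF$, obtaining an $L_\infty$ morphism $g \colon \Tot(\sU,\HOM^*_{\Oh_X}(\sE,\sE)) \rightsquigarrow \Tot(\sU, \Omega_X^{\le 1}[2])$ whose linear term is $g_1(x) = -\Tr(ux)$, where $u = d_{\Tot}D$ is the extension cocycle of the fixed simplicial connection $D \in \Tot(\sU, \sJ^*_{\Omega^1})$. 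By Remark~\ref{rem.primacomp} the linear component $g_1$ descends to a well-defined map on cohomology, so it suffices to verify that this induced map coincides with $\tau_1$.

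Next I would identify the relevant cohomology groups. Since $\sU$ is an affine open cover and $\HOM^*_{\Oh_X}(\sE,\sE)$ is a bounded complex of quasi-coherent sheaves, the Whitney integration theorem (Example~\ref{ex.cech}) provides a quasi-isomorphism $\Tot(\sU,\HOM^*_{\Oh_X}(\sE,\sE)) \simeq \mathbf{R}\Gamma(X,\HOM^*_{\Oh_X}(\sE,\sE))$, whose cohomology is $\Ext^*_X(\sF,\sF)$ because $\sE$ is a locally free resolution of $\sF$. Analogously, $H^*(\Tot(\sU, \Omega_X^{\le 1}[2])) \cong \mathbb{H}^*(X, \Omega_X^{\le 1}[2])$.

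The main step, and the only nontrivial one, is to verify that the extension cocycle $u \in \Tot(\sU, \HOM^*_{\Oh_X}(\sE, \Omega_X^1[-1] \otimes \sE))$ represents, under the identifications above, the (negative of the) Atiyah class $\At(\sF) \in \Ext_X^1(\sF, \sF \otimes \Omega_X^1)$. For the algebroid $\sD^*(X,\sE)$, a simplicial lifting of the identity corresponds (via Example~\ref{ex.coppie}) to a simplicial connection $D' = \{D'_{i_0 \cdots i_n}\}$ on $\sE$, and the component $d_{\Tot}D$ measures the defect from a genuine global connection: the \v{C}ech part yields the differences $D'_{i_1 \cdots i_n} - D'_{i_0 \cdots \hat i_k \cdots i_n}$ of local connections, which is the standard \v{C}ech cocycle representative of $\At(\sE)$, while the internal differential part expresses compatibility with $d_\sE$. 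This is the Thom–Whitney incarnation of the classical construction identifying the Atiyah class with the obstruction to lifting $\dr$ through $\sJ^*_{\Omega^1} \to \DER_\K(\Oh_X, \Omega^1_X[-1])$. Under the quasi-isomorphism $\sE \simeq \sF$, the cohomology class of $u$ agrees with $\At(\sF)$.

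With this identification in hand, the conclusion is immediate: for a cohomology class $c \in \Ext^*_X(\sF,\sF)$ represented by a cocycle $x \in \Tot(\sU,\HOM^*_{\Oh_X}(\sE,\sE))$, the product $ux$ represents $\At(\sF) \circ c$ under the Yoneda pairing, so that
\begin{equation*}
H^*(g_1)(c) = -\Tr(\At(\sF)\circ c) = \sigma_1(c) \in H^*(X, \Omega^1_X[1]),
\end{equation*}
and composing with the map induced on hypercohomology by the inclusion $\Omega^1_X[1] \hookrightarrow \Omega_X^{\le 1}[2]$ (which is how elements of $H^*(\Tot(\sU, \Omega^1_X[-1]\otimes\Oh_X[2]))$ sit inside $H^*(\Tot(\sU, \Omega_X^{\le 1}[2]))$) produces precisely $\tau_1(c)$. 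The main obstacle is the careful bookkeeping in the third step: matching signs, degrees and the Yoneda product with the explicit Thom–Whitney representative of the Atiyah cocycle coming from a simplicial connection.
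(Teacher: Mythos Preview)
Your proposal is correct and matches the paper's approach; in fact the paper gives no proof at all for this corollary, treating it as an immediate consequence of Corollary~\ref{cor.linfBF} together with the standard identifications of cohomology and of the extension cocycle with the Atiyah class. Your write-up simply makes explicit what the paper leaves implicit.
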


Recall that to a DG-Lie algebra $M$ over a field $\K$ of characteristic zero we can associate a functor $\Def_M \colon \Art_\K \to \Set$, the functor of Maurer-Cartan solutions modulo gauge action (for more details see e.g. \cite{sheaves,  semireg,DMcoppie, ManRendiconti,LMDT}). It is well known that $H^2(M)$ is an obstruction space for the deformation functor $\Def_M$. Recall also that an $L_\infty$ morphism between DG-Lie algebras $g \colon V \rightsquigarrow M$ gives a morphism of deformation functors $g \colon \Def_V \to \Def_M$ such that the map induced in cohomology commutes with obstruction maps. If the DG-Lie algebra $M$ has trivial bracket, every obstruction in $\Def_M$ is trivial, and therefore every obstruction in $\Def_V$ belongs to the kernel of the map $g \colon H^2(V) \to H^2(M)$.

\begin{corollary}\label{cor.fasci2} 
	Let $\sF$ be a coherent sheaf admitting a finite locally free resolution on a smooth separated scheme $X$ of finite type over a field $\K$ of characteristic zero. Then every obstruction to the deformations of $\sF$ belongs to the kernel of the map
	\[  \tau_1\colon \Ext_X^2(\sF,\sF)\to \mathbb{H}^{2}(X,\Omega_X^{\le 1}[2]).\]
	{If the Hodge to de Rham spectral sequence of $X$ degenerates at $E_1$, then  
	every obstruction to the deformations of $\sF$ belongs to the kernel of the map
	\[  \sigma_1\colon \Ext_X^2(\sF,\sF)\to {H}^{3}(X,\Omega_X^{1}),\qquad \sigma_1(a)=-\Tr(\At(\sF)\circ a).\]}
\end{corollary}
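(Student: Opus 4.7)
The plan is to combine Corollary~\ref{cor.fasci1} with the abstract obstruction-theoretic principle recalled immediately before the statement. First I would choose a finite locally free resolution $\sE \to \sF$, which exists by hypothesis, and fix an affine open cover $\sU$ of $X$. Since $\sE$ is a bounded complex of locally free sheaves on a smooth separated scheme, the DG-Lie algebra $\Tot(\sU,\HOM^*_{\Oh_X}(\sE,\sE))$ is a model for the DG-Lie algebra controlling the deformations of $\sF$; in particular its $H^2$ is canonically identified with $\Ext_X^2(\sF,\sF)$ and is an obstruction space for $\Def_\sF$.

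Next I would invoke the existence of a simplicial connection $D\in\Tot(\sU,\sJ^*_{\Omega^1})$: on each affine open one has an algebraic connection on $\sE$, and the Thom-Whitney formalism glues these local liftings of $\dr$ to a global element of $\Tot(\sU,\sJ^*_{\Omega^1})$, as indicated in Example~\ref{ex.coppie}. Corollary~\ref{cor.fasci1} then produces an $L_\infty$ morphism
\[
g\colon \Tot(\sU,\HOM^*_{\Oh_X}(\sE,\sE)) \rightsquigarrow \Tot(\sU,\Omega_X^{\le 1}[2])
\]
whose linear part $g_1$ induces $\tau_1$ in cohomology. The target is an abelian DG-Lie algebra, so every obstruction in its associated deformation functor is automatically trivial; by the functoriality of obstructions along $L_\infty$ morphisms, every obstruction to $\Def_\sF$ therefore lies in the kernel of the map induced by $g_1$ on $H^2$, which is exactly $\tau_1$.

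For the second assertion, I would compare $\tau_1$ with $\sigma_1$ via the stupid filtration of $\Omega_X^{\le 1}[2]$. This yields a hypercohomology spectral sequence built from $H^q(X,\Omega_X^p)$ for $p=0,1$, whose $d_1$ is induced by $\dr$ and is a truncated piece of the Hodge to de Rham spectral sequence of $X$. Degeneration of the latter at $E_1$ forces this spectral sequence to degenerate as well, from which one reads off that the edge map $i_1\colon H^3(X,\Omega_X^1)\to \mathbb{H}^2(X,\Omega_X^{\le 1}[2])$ is injective. Since $\tau_1=i_1\circ\sigma_1$ by construction, the first part of the statement immediately implies that every obstruction lies in $\ker\sigma_1$.

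The main non-formal step on which the whole argument rests is the identification of the cohomology-level map induced by $g_1$ with $\tau_1$: one must check that the extension cocycle $u=d_{\Tot}D$ of the DG-Lie algebroid $\sD^*(X,\sE)$ represents, via the isomorphism $H^2(\Tot(\sU,\Omega_X^1[-1]\otimes\HOM^*_{\Oh_X}(\sE,\sE)))\cong\Ext_X^1(\sF,\sF\otimes\Omega_X^1)$, the Atiyah class $\At(\sF)$, and that the assignment $x\mapsto -\Tr(ux)$ then corresponds, after the standard identifications, to Yoneda composition $a\mapsto -\Tr(\At(\sF)\circ a)$ followed by the inclusion $i_1$. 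This comparison transplants the analogous computation of \cite{linfsemireg} from the complex-analytic to the algebraic setting and is the only step beyond the already-established $L_\infty$ morphism.
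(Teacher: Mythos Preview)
Your proposal is correct and follows essentially the same line as the paper: invoke Corollary~\ref{cor.fasci1} to lift $\tau_1$ to an $L_\infty$ morphism into the abelian DG-Lie algebra $\Tot(\sU,\Omega_X^{\le 1}[2])$, use that obstructions vanish in an abelian target, and for the second part deduce injectivity of $i_1$ from Hodge--de~Rham degeneration so that $\ker\tau_1=\ker\sigma_1$. The extra paragraph on identifying $g_1$ with $\tau_1$ via the Atiyah class is already packaged into the statement of Corollary~\ref{cor.fasci1}, so in the context of this paper you may simply cite that result rather than re-deriving the comparison.
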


\begin{proof}
	
	If $\sE$ is a finite locally free resolution of $\sF$, the DG-Lie algebra $\Tot(\mathcal{U},\HOM^*_{\Oh_X} (\sE,\sE))$ controls the deformations of $\sF$, see e.g. \cite{sheaves}. According to Corollary~\ref{cor.fasci1}, the map
	\[  \tau_1\colon \Ext_X^2(\sF,\sF)\to \mathbb{H}^{2}(X,\Omega_X^{\le 1}[2])\]
	lifts to an $L_\infty$ morphism \[ g\colon \Tot(\mathcal{U},\HOM^*_{\Oh_X} (\sE,\sE)) \rightsquigarrow \Tot(\mathcal{U}, \Omega_X^{\leq 1} [2]),\]
	whose linear component $g_1$ commutes with obstruction maps  of the associated deformation functors. By construction the DG-Lie algebra $\Tot(\mathcal{U}, \Omega_X^{\leq 1} [2])$ is abelian and therefore every obstruction of the associated deformation functor is trivial.

	If the Hodge to de Rham spectral sequence of $X$ degenerates at $E_1$ then the inclusion of complexes $ \Tot(\mathcal{U}, \Omega^1_X[1])\to \Tot(\mathcal{U}, \Omega_X^{\leq 1} [2])$ is injective in cohomology, so that  $H^3(X,\Omega_X^1) \hookrightarrow \mathbb{H}^2(X,\Omega_X^{\leq 1}[2])$ and the maps $\sigma$ and $\tau$ have the same kernel.
\end{proof}

\begin{remark}
		In the setting of Theorem~\ref{teo.linfinitoalg}, if the cyclic form is induced by a DG-Lie algebroid representation $\theta \colon \mathcal{A} \to \sD^*(X, \sE)$, the $L_\infty$ morphism can be obtained up to a sign from the the $L_\infty$ morphism of Corollary~\ref{cor.linfBF} as follows. 
		Let $D \in \Tot(\sU, \Omega_X^1[-1] \otimes \mathcal{A})$ denote a simplicial lifting of the identity, and denote by $\Id \otimes \theta \colon \Tot(\sU, \Omega_X^1[-1] \otimes \mathcal{A}) \to \Tot(\sU, \Omega_X^1[-1] \otimes \sD^*(X, \sE)) \cong \Tot(\sU, \sJ^*_{\Omega})$ the induced map on the totalisation. Denoting as usual by $\alpha$ the anchor map of the transitive DG-Lie algebroid $\sD^*(X,\sE)$, it is clear that $(\Id \otimes \theta) (D)$ is a simplicial lifting of the identity in $\Tot(\sU, \Omega_X^1[-1] \otimes \sD^*(X, \sE))$:
		\[ (\Id \otimes \alpha)(\Id \otimes \theta) (D)= \Id \otimes (\alpha \circ \theta)(D)= (\Id \otimes \rho )(D) = \Id_{\Omega^1} \in \Tot(\sU, \Omega_X^1[-1] \otimes \Theta_X).\]
		
		Let $u = d_{\Tot}D \in \Tot(\sU, \Omega_X^1[-1] \otimes \LL)$ denote the extension  cocycle associated to $D$, then 
		\[ (\Id \otimes \theta )(u)= (\Id \otimes \theta)(d_{\Tot} D)= d_{\Tot} (\Id \otimes \theta)(D).\]
		Therefore the $L_\infty$ morphism $f \colon \Tot(\sU, \LL) \rightsquigarrow \Tot(\sU, \Omega_X^{\leq1}[2])$ associated to $D$ and to $\la-,-\ra_\theta $ is the composition of the DG-Lie algebra morphism
		\[\theta \colon \Tot(\sU, \LL)  \to \Tot(\sU, \HOM^*_{\Oh_X}(\sE,\sE))\]
		and of the $L_\infty$ morphism 
		\[- g\colon \Tot(\mathcal{U},\HOM^*_{\Oh_X} (\sE,\sE)) \rightsquigarrow \Tot(\mathcal{U}, \Omega_X^{\leq 1} [2])\] associated to the simplicial lifting of the identity $(\Id \otimes \theta)(D)$ and to the cyclic form $(a,b) \mapsto \Tr (ab)$.
\end{remark}

	\section{The $L_\infty$ morphism for the Atiyah Lie algebroid of a principal bundle}\label{sec.fibratiprinc}
	
	Since Lie algebroids arise naturally in connection with principal bundles, we give an application of the $L_\infty$ morphism constructed in Theorem~\ref{teo.linfinitoalg} to the deformation theory of principal bundles.
	
	Let $X$ be a smooth separated scheme of finite type over an algebraically closed field $\K$ of characteristic zero,
	let $G$ be an affine algebraic group with Lie algebra $\g$, and let $P \to X$ be a principal $G$-bundle on $X$. By $G$-principal bundle we mean a $G$-fibration which is locally trivial for the Zariski topology, see e.g. \cite{Sor}.
	We begin by finding a DG-Lie algebra that controls the deformations of $P$, using an argument similar to those in \cite{Bis, LMDT,Ueno} . Let $\Art_\K$ be the category of Artin local $\K$-algebras with residue field $\K$.	For any  $A$ in $\Art_\K$   denote  by $\m_A$ its maximal ideal and by $0$ the closed point in $\Spec A$.
	
	To every semicosimplicial Lie algebra $\mathfrak{h}$ over  $\K$ 
	\begin{center}
		\begin{tikzcd}
		\mathfrak{h}: & \mathfrak{h}_0 \arrow[r, "\delta_0", shift left] \arrow[r, "\delta_1"', shift right] & \mathfrak{h}_1 \arrow[r, "\delta_0", shift left=2] \arrow[r, "\delta_2"', shift right=2] \arrow[r, "\delta_1" description] & \mathfrak{h}_2 \arrow[r, shift left=3] \arrow[r, shift left] \arrow[r, shift right=3] \arrow[r, shift right] & \cdots
		\end{tikzcd}
	\end{center}
	there are associated two functors $Z^1_\mathfrak{h}, H^1_\mathfrak{h} \colon \Art_\K \to \Set$, which here are described in brief; for more details see \cite{semicos,LMDT}. The functor of non-abelian cocycles $Z^1_\mathfrak{h}$ is defined as 
	\[ Z^1_\mathfrak{h}(A) = \{ e^x \in \exp (\mathfrak{h}_1 \otimes \m_A) \ | \ e^{\delta_1(x)} = e^{\delta_2(x)}e^{\delta_0(x)}\}.\]
	For every $A \in \Art_\K$ there is a left action of $\exp (\mathfrak{h}_0\otimes \m_A) $ on $Z^1_\mathfrak{h} (A)$
	\[ \exp (\mathfrak{h}_0\otimes \m_A) \times Z^1_\mathfrak{h} (A) \to Z^1_\mathfrak{h}(A),\ \ (e^a, e^x)\mapsto e^{\delta_1(a)} e^x e^{-\delta_0(a)}.\]
	The functor $H^1_\mathfrak{h} \colon \Art_\K \to \Set$ is then defined as 
	\[ H^1_\mathfrak{h}(A) = \frac{Z^1_\mathfrak{h}(A)}{\exp (\mathfrak{h}_0\otimes \m_A)}.\]
	
	Consider the Thom-Whitney totalisation functor $ \Tot$ from semicosimplicial DG-vector spaces to
	DG-vector spaces 
	(Definition~\ref{def.tot}), and recall it takes semicosimplicial Lie algebras to DG-Lie algebras.  We then have the following result, see  \cite{semicos,Hin,LMDT}:
	\begin{proposition}\label{prop.iso-funtori}
		For every semicosimplicial Lie algebra $\mathfrak{h}$ there exists a natural isomorphism of functors $H_\mathfrak{h}^1 \cong \Def_{\Tot(\mathfrak{h})}$ .
	\end{proposition}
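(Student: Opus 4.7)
The plan is to construct an explicit natural transformation $\Phi \colon \Def_{\Tot(\mathfrak{h})} \to H^1_\mathfrak{h}$ by Dupont-type simplicial integration and then check that it is a bijection for every $A \in \Art_\K$. Given a Maurer-Cartan element $x = (x_n) \in \Tot(\mathfrak{h})^1 \otimes \m_A$, the component on the affine $1$-simplex $x_1 \in A_1^1 \otimes \mathfrak{h}_1 \otimes \m_A$ is a polynomial $(\mathfrak{h}_1 \otimes \m_A)$-valued one-form, and I would set $\Phi_A([x]) := [e^y]$, where $e^y := P\exp\int_0^1 x_1 \in \exp(\mathfrak{h}_1 \otimes \m_A)$ is the path-ordered exponential, which is well-defined because $\m_A$ is nilpotent.

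To verify that $e^y \in Z^1_\mathfrak{h}(A)$, I would examine the MC equation on the $2$-simplex: flatness of $x_2$ together with the Thom-Whitney compatibility conditions $(\delta^*_k \otimes \Id)x_n = (\Id \otimes \delta_k) x_{n-1}$ of Definition~\ref{def.tot} identifies the holonomies along the three edges of $\Delta^2$ with $e^{\delta_0 y}$, $e^{\delta_1 y}$, and $e^{\delta_2 y}$, and the vanishing of the total monodromy around $\partial \Delta^2$ yields the cocycle identity $e^{\delta_1 y} = e^{\delta_2 y} e^{\delta_0 y}$. Analogously, for a gauge element $a = (a_n) \in \Tot(\mathfrak{h})^0 \otimes \m_A$, restricting to the $0$-simplex yields the action $e^{\delta_1 a_0} (\cdot) e^{-\delta_0 a_0}$ on $Z^1_\mathfrak{h}(A)$, producing a well-defined natural transformation on deformation classes.

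To show that $\Phi$ is an isomorphism of functors, my preferred route is obstruction-theoretic rather than constructing an inverse by hand. Both $\Def_{\Tot(\mathfrak{h})}$ and $H^1_\mathfrak{h}$ are deformation functors in the Schlessinger-Manetti sense, and $\Phi$ is a morphism between them, so it is enough to show that $\Phi$ induces an isomorphism on tangent spaces and is compatible with obstruction theory. The Whitney-Dupont integration theorem recalled in Example~\ref{ex.cech} identifies the cohomology of $\Tot(\mathfrak{h})$ with the cohomology of the normalised cochain complex of $\mathfrak{h}$, which is exactly where the tangent and obstruction classes of $H^1_\mathfrak{h}$ naturally live; a direct comparison then shows that $\Phi$ realises the identity under this identification, and the standard smoothness/obstruction principle upgrades this to a bijection at every $A \in \Art_\K$.

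The main obstacle I anticipate is the bookkeeping of signs and Baker-Campbell-Hausdorff combinatorics needed to translate the flatness of $x_2$ on $\Delta^2$ into the non-abelian identity in $\exp(\mathfrak{h}_1 \otimes \m_A)$, together with the matching of Koszul conventions used in the Thom-Whitney construction. An alternative and arguably more conceptual strategy would be to invoke Hinich's equivalence \cite{Hin}, which identifies these two deformation functors directly at the $\infty$-categorical level via a Dold-Kan type correspondence between semicosimplicial and cochain data and so bypasses the explicit integration entirely; this is the route I would fall back on if the combinatorial verification proves too painful.
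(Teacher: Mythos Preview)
The paper does not give its own proof of this proposition: it is stated with a citation to \cite{semicos,Hin,LMDT} and used as a black box. Your sketch is entirely in line with the approaches in those references --- the holonomy/path-ordered exponential construction and the tangent--obstruction comparison are essentially what is carried out in \cite{semicos} and \cite{LMDT}, and the alternative you mention at the end is exactly Hinich's descent \cite{Hin} --- so there is nothing in the paper to compare against, and your proposal is a faithful outline of the cited arguments.
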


	\begin{definition}\cite{Bis, Don}
		An infinitesimal deformation of $P$ over $A \in \Art_\K$ is the data of a principal $G$-bundle $P_A \to X \times \Spec A$ and an isomorphism $\theta \colon i^* (P_A) \cong P$.
		\begin{center}
			\begin{tikzcd}
			P \arrow[d, "p"'] \arrow[r] & P_A \arrow[d, "p_A"] \\
			X \arrow[r, "i"]            & X\times \Spec A .    
			\end{tikzcd}
		\end{center}	
		Two deformations $(P_A, \theta)$ and $(P_A, \theta')$ are isomorphic if there exists an isomorphism of principal $G$-bundles $\lambda \colon P_A \to P'_A$ such that $\theta = \theta' \circ i^* (\lambda)$.
		
	\end{definition}
	This defines a functor $\Def_P \colon \Art_\K \to \Set$ such that $\Def_P(A)$ is the set of isomorphism classes of deformations of $P$ over $A \in \Art_\K$. For every $A \in \Art_\K$, the set $\Def_P(A)$ contains the trivial deformation $P \times \Spec A \to X \times \Spec A$.

			If $M$ is a DG-Lie algebra such that $\Def_P \cong \Def_M$, where $\Def_M$ is the functor of Maurer-Cartan solutions modulo gauge action, one says that $M$ controls the deformations of $P$. 
		
	Fix an open cover $\sU = \{ U_i\}$ of $X$  such that $P$ is trivial on every $U_i$, and let $\{g_{ij} \colon U_{ij}  \to G\}$ denote the transition functions for $P$.
	Let $\g$ be the Lie algebra of $G$.
	\begin{itemize}
		\item 	Let $\ad P = P \times^G \g$ denote the adjoint bundle of $P$, with transition functions $\{\Ad_{g_{ij}}\} $, and let $\mathscr{ad}(P)$ denote the sheaf of sections of the vector bundle $\ad P$.
		\item The group $G$ acts on itself by conjugation; denote by $\Ad P = P \times^G G$ the associated bundle corresponding to this action. Recall that  $ \Gamma (X,\Ad(P)) \cong \Gau(P)$, where  $\Gau(P)$ is the group of bundle automorphisms of $P$. 
	\end{itemize}
	
	There is a one to one correspondence between first order deformations of $P$, i.e., deformations over $\K[t]/(t^2) \in \Art_\K$, and $H^1(X, \Sad)$, see e.g. \cite{Don,Ueno}. This implies that on every affine open set the deformations of $P$ are trivial.
	
	\begin{lemma}\label{lem.sezioni}
		Let $P \times^G (\g \otimes \m_A)$ be the associated bundle induced by the action  $\Ad \otimes \Id \colon G \times \g\otimes \m_A \to \g \otimes \m_A$. Then there is an isomorphism
		\[   \Gamma (P \times^G (\g \otimes \m_A)) \cong \Gamma (\Sad) \otimes \m_A.\]
	\end{lemma}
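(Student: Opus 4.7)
The plan is to exploit the fact that since $A$ is Artin local with residue field $\K$, the maximal ideal $\m_A$ is a finite-dimensional $\K$-vector space carrying a \emph{trivial} $G$-action. First, fixing a $\K$-basis $\{m_1,\ldots,m_r\}$ of $\m_A$ yields a $G$-equivariant isomorphism $\g\otimes_\K \m_A\cong \g^{\oplus r}$, where the action on the left is $\Ad\otimes \Id$ and on the right is $\Ad^{\oplus r}$. Because the associated bundle construction $P\times^G(-)$ commutes with direct sums of $G$-representations, this gives an isomorphism of vector bundles
\[
P\times^G(\g\otimes_\K\m_A)\;\cong\;(P\times^G \g)^{\oplus r}\;=\;(\ad P)^{\oplus r},
\]
and taking sections (which commutes with finite direct sums) produces $\Gamma(\Sad)^{\oplus r}\cong \Gamma(\Sad)\otimes_\K \m_A$.

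A more conceptual, basis-free way to phrase the same argument — and the one I would actually write up, to make naturality and the $A$-module structure manifest — is to observe that for any finite-dimensional $\K$-vector space $W$ with trivial $G$-action one has a canonical isomorphism $P\times^G(V\otimes_\K W)\cong (P\times^G V)\otimes_\K W$ of sheaves on $X$. This is checked locally on a trivialising cover $\{U_i\}$ of $P$ on which both sides identify with $U_i\times(V\otimes W)$, and the transition cocycles $\{\Ad_{g_{ij}}\otimes \Id_W\}$ coincide on the two sides precisely because $G$ acts trivially on $W$. Applying this with $V=\g$ and $W=\m_A$, and then using that $\Gamma(X,\sE\otimes_\K W)\cong \Gamma(X,\sE)\otimes_\K W$ for any sheaf $\sE$ whenever $W$ is finite-dimensional, yields the claim. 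The only mild subtlety — and the sole thing that needs a little care — is checking naturality and compatibility with the $A$-module structure on $\m_A$; this is automatic from the basis-free formulation, since the isomorphism is built from the canonical pairing $\sE\otimes_\K W\to \sE\otimes_\K W$ with no choices involved.
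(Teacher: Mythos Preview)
Your proof is correct and follows essentially the same approach as the paper: both arguments fix a $\K$-basis of the finite-dimensional vector space $\m_A$ and exploit the fact that the $G$-action on $\m_A$ is trivial, so the compatibility conditions decouple componentwise. The only difference is packaging---the paper writes the argument explicitly in terms of transition functions on a trivialising cover, whereas you invoke the functorial properties of the associated-bundle construction (commuting with direct sums, or more generally with tensoring by a trivial representation); these are two ways of saying the same thing.
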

	
	\begin{proof}	
		A section of $P \times^G (\g \otimes \m_A)$ is the data of 
		\[ \{ \omega_i \colon U_i \to \g \otimes \m_A\ | \ \omega_i(p) = (\Ad_{g_{ij}(p)} \otimes \Id) \omega_j(p) \ \ \forall p \in U_{ij} \}.\]
		Let $t_1, \cdots, t_n$ be a basis of the finite dimensional vector space $\m_A$, then for every $p \in U_i$ one can write
		$\omega_i(p) = \sum_k  h_{i,k}(p) \otimes t_k$. Since  the action of $G$ on $\g \otimes \m_A$ is defined as 
		\[ g \cdot (x \otimes t) = \Ad_g (x) \otimes t,\] the maps $h_{i,k}$ are such that $h_{i,k}(p) = \Ad_{g_{ij}(p)} h_{j,k} (p)$ for every $p \in U_{ij}.$ 
		
		An element of $\Gamma( \Sad ) \otimes \m_A$ is a finite sum $\sum_k \eta_k \otimes t_k$, with $\eta_k $ sections of $\ad P$, so that each $\eta_k$ is the data of
		\[ \{ \eta_{k,i} \colon U_i \to \g \ | \ \eta_{k,i}(p) = \Ad_{g_{ij}(p)} \eta_{k,j} (p) \ \ \forall p \in U_{ij}  \}.\]
		Then, setting $(\eta_{k,i} \otimes t_k )(p)= \eta_{k,i} (p) \otimes t_k  $ for every $p \in U_i$, the data 
		$\{ \eta_{k,i} \otimes t_k \colon U_i \to \g \otimes \m_A  \}$ is exactly a section of $P \times^G (\g \otimes \m_A)$.

	\end{proof}
	
	\begin{lemma}\label{lem.automorfismi}
		For every $A \in \Art_\K$ there is an isomorphism of groups 
		\[ \exp (\Gamma( \Sad ) \otimes \m_A) \cong \{ \text{automorphisms of the trivial deformation } P \times \Spec A \}.\]
	\end{lemma}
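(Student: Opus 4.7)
The plan is to identify both sides with global sections of an associated bundle on $X$ and then invoke the nilpotent-exponential correspondence.

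First, I would describe the right-hand side explicitly. Fixing trivializations of $P$ over $\mathcal{U} = \{U_i\}$ with cocycle $\{g_{ij}\}$, any automorphism $\lambda$ of $P \times \Spec A$ which restricts to the identity on the central fiber is given locally by right multiplication by functions $h_i \colon U_i \to \Ker(G(A) \to G(\K))$ satisfying $h_i = g_{ij} h_j g_{ij}^{-1}$ on $U_{ij}$. Hence the group of such automorphisms is canonically isomorphic to $\Gamma(X, P \times^G \Ker(G(A) \to G(\K)))$, where $G$ acts on the fiber by conjugation and the group structure is defined fiberwise.

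Next, since $\m_A$ is nilpotent and $\car \K = 0$, the exponential series
\[ \exp \colon \g \otimes \m_A \to \Ker(G(A) \to G(\K)) \]
is a bijection with inverse $\log$, and is $G$-equivariant in the sense that $\exp(\Ad_g(x)) = g \exp(x) g^{-1}$ for all $g \in G$ and $x \in \g \otimes \m_A$. Consequently $\exp$ induces a bijection of associated bundles
\[ P \times^G (\g \otimes \m_A) \xrightarrow{\sim} P \times^G \Ker(G(A) \to G(\K)), \]
and hence of their spaces of global sections.

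Finally, Lemma~\ref{lem.sezioni} identifies $\Gamma(X, P \times^G (\g \otimes \m_A))$ with the nilpotent Lie algebra $\Gamma(\Sad) \otimes \m_A$; equipping this with the Baker-Campbell-Hausdorff product is, by definition, the group $\exp(\Gamma(\Sad) \otimes \m_A)$. Composing the two identifications yields the desired bijection, and it is a group homomorphism because the pointwise identity $e^x e^y = e^{x \star y}$ (with $\star$ the BCH product) passes through the patching data thanks to the $G$-equivariance of $\exp$. The main point to verify carefully is precisely this compatibility of the BCH multiplication with the local-to-global patching by the cocycle $\{g_{ij}\}$; once that is in place, the isomorphism is forced.
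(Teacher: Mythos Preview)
Your proof is correct and follows essentially the same approach as the paper: both identify automorphisms of the trivial deformation with sections of the associated bundle $P \times^G G^0(A)$ (your $\Ker(G(A)\to G(\K))$ is the paper's $G^0(A)$), invoke the $G$-equivariant exponential isomorphism $\exp\colon \g\otimes\m_A \xrightarrow{\sim} G^0(A)$, and then apply Lemma~\ref{lem.sezioni}. The only difference is the direction of exposition---you start from the automorphism side whereas the paper starts from the Lie algebra side---but the logical content is the same.
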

	
	\begin{proof}

		Denote by $G^0(A) $ the group of morphisms $f \colon \Spec A \to G$ such that $f(0) = \Id_G$, and recall that there is an isomorphism of groups $\exp (\g \otimes \m_A) \cong G^0(A)$ (see e.g. \cite[Section 10]{Sim}). The group structure on $G^0(A)$ is induced by the group structure on $G$, while $\exp (\g \otimes \m_A)$ is a group with the Baker-Campbell-Hausdorff product. 
		By Lemma~\ref{lem.sezioni}  $$\Gamma( \Sad ) \otimes \m_A \cong \Gamma (P \times^G (\g \otimes \m_A)),$$ so that we can work with $\exp (\Gamma (P \times^G (\g \otimes \m_A))).$
		Consider the associated bundle $P \times^G G^0(A)$, induced by the adjoint action of $G$ on $G^0(A)$; the isomorphism $\exp (\g \otimes \m_A) \cong G^0(A)$ induces an isomorphism
		$\exp (\Gamma (P \times^G (\g \otimes \m_A)))  \cong \Gamma (P \times^G G^0(A)) $.
		In fact, a section of $P \times^G (\g \otimes \m_A)$ is the data of 
		\[ \{\eta_i \colon U_i \to \g \otimes \m_A \ | \ \eta_i(p) = (\Ad_{g_{ij}(p)} \otimes \Id) \eta_j (p) \ \ \forall p \in U_{ij}\},\]
		and composing with the exponential $\exp \colon \g \otimes \m_A \to G^0(A)$ we obtain 
		\[ \{\exp \circ \eta_i \colon U_i \to G^0(A) \ | \ \exp \circ \eta_i(p) = {g_{ij}(p)}\exp \circ  \eta_j (p){g_{ij}(p)}^{-1} \ \ \forall p \in U_{ij}\}. \]
		
		Notice that this data is equivalent to
		\[\{ \lambda_i \colon U_i \times \Spec A \to G \ | \ \lambda_i(p,0) = \Id_G \ \  \forall p \in U_i,\ \ \lambda_i (p) = g_{ij}(p) \lambda_j (p) g_{ij}(p)^{-1} \ \ \forall p \in U_{ij} \},\]
		which is a section of the associated bundle $ \Ad (P \times \Spec A) = (P \times \Spec A) \times^G G$, where $G$ acts on itself by conjugation.
		
		For any $G$-principal bundle $Q$ the global sections of the associated bundle $\Ad(Q) = Q \times^G G$ correspond to bundle automorphisms of $Q$. 
		Therefore the $\{\lambda_i\}$ give an element $F \in \Gau(P \times \Spec A)$, and the condition $\lambda_i(p,0) = \Id_G$ for all $p \in U_i$ is equivalent to the fact that the automorphism $F$ induces the identity when restricted to $P$, so that $F$ is an automorphism of the trivial deformation.
	\end{proof}

	\begin{proposition}\label{prop.deformazioni}
		Let $\mathcal{U} = \{ U_i\}$ be an affine open cover of $X$ and let $\Sad (\mathcal{U})$ be the semicosimplicial Lie algebra of \v{C}ech cochains:
		\begin{center}
			\begin{tikzcd}
			\prod_i \Sad (U_i) \arrow[r, shift left] \arrow[r, shift right] & {\prod_{i,j}\Sad(U_{ij})} \arrow[r, shift left=2] \arrow[r, shift right=2] \arrow[r] & {\prod_{i,j,k}\Sad(U_{ijk})} \arrow[r, shift left=3] \arrow[r, shift left] \arrow[r, shift right=3] \arrow[r, shift right] & \cdots.
			\end{tikzcd}
		\end{center}
		There is a natural isomorphism of functors $H^1_{\Sad(\mathcal{U})} \to \Def_P $.
	\end{proposition}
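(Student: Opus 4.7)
The plan is to construct mutually inverse natural transformations between $H^1_{\Sad(\sU)}$ and $\Def_P$ by interpreting a non-abelian $1$-cocycle as gluing data for the trivial deformations of $P$ over the affine opens $U_i$. The central tool is Lemma~\ref{lem.automorfismi}, which I will apply to each open of the cover $\sU$ and to each intersection $U_{ij}$ to identify the group $\exp(\Sad(U_{i_1\cdots i_k}) \otimes \m_A)$ with the group of automorphisms of the trivial deformation $P|_{U_{i_1\cdots i_k}} \times \Spec A$.

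For the forward map $Z^1_{\Sad(\sU)}(A) \to \Def_P(A)$, I would take $e^x \in Z^1_{\Sad(\sU)}(A)$, interpret its component in $\Sad(U_{ij})\otimes\m_A$ as an automorphism $\varphi_{ij}$ of the trivial deformation of $P|_{U_{ij}}$, and verify that the non-abelian cocycle relation $e^{\delta_1 x} = e^{\delta_2 x} e^{\delta_0 x}$ translates precisely into the principal bundle cocycle identity $\varphi_{ik} = \varphi_{ij} \circ \varphi_{jk}$ on $U_{ijk}$. Gluing the trivial deformations $P|_{U_i} \times \Spec A$ along the $U_{ij}\times\Spec A$ via the $\varphi_{ij}$ then produces a principal $G$-bundle $P_A$ on $X \times \Spec A$ whose restriction to the central fibre recovers $P$, hence a deformation. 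I would then verify that the gauge action $e^x \mapsto e^{\delta_1 a}e^x e^{-\delta_0 a}$ by an element $e^a \in \exp(\Sad(\sU)_0 \otimes \m_A)$ corresponds, under Lemma~\ref{lem.automorfismi} applied on each $U_i$, to changing the local gluing by automorphisms $\psi_i$ of the trivial deformations on $U_i$, which yields an isomorphic glued bundle. Hence the map descends to a well-defined transformation $H^1_{\Sad(\sU)}(A) \to \Def_P(A)$.

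For the inverse direction, given a deformation $(P_A, \theta)$, I would use the fact noted before Lemma~\ref{lem.sezioni} that on every affine open deformations of $P$ are trivial, to produce local trivializations $\phi_i \colon P_A|_{U_i \times \Spec A} \xrightarrow{\sim} P|_{U_i}\times\Spec A$ compatible with $\theta$. The transition automorphisms $\phi_i \circ \phi_j^{-1}$ are then automorphisms of the trivial deformations on $U_{ij}$, giving via Lemma~\ref{lem.automorfismi} a family $(e^{x_{ij}}) \in \prod_{i,j}\exp(\Sad(U_{ij})\otimes \m_A)$ satisfying the non-abelian cocycle identity; a different choice of the $\phi_i$ changes the family by the gauge action of $\exp(\Sad(\sU)_0 \otimes \m_A)$, so the resulting class in $H^1_{\Sad(\sU)}(A)$ is independent of choices. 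That these two constructions are mutually inverse and natural in $A$ is then a direct unwinding of the definitions, since all constructions depend functorially on $\m_A$.

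The main obstacle I expect is the bookkeeping: translating carefully between the semicosimplicial face-map conventions $\delta_0,\delta_1,\delta_2$ on the \v{C}ech complex and the classical principal bundle cocycle convention, and matching the order of composition in the Baker--Campbell--Hausdorff product on $\exp(\Sad(U_{ij})\otimes\m_A)$ with composition of bundle automorphisms (so that, for instance, $e^{\delta_1 a}e^x e^{-\delta_0 a}$ really corresponds to $\psi_i \circ \varphi_{ij} \circ \psi_j^{-1}$). All the substantive geometric input — identifying group-valued cochains with bundle automorphisms, and knowing that deformations are trivial on affines — has already been packaged into Lemmas~\ref{lem.sezioni} and~\ref{lem.automorfismi} and the discussion preceding them.
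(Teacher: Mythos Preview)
Your proposal is correct and follows essentially the same approach as the paper: both use Lemma~\ref{lem.automorfismi} on each $U_{ij}$ to identify a non-abelian $1$-cocycle with gluing data $\{f_{ij}\}$ satisfying $f_{ik}=f_{ij}f_{jk}$, glue the trivial deformations $P|_{U_i}\times\Spec A$ to obtain $P_A$, and then interpret the gauge action of $\exp(\prod_i\Sad(U_i)\otimes\m_A)$ as replacing $f_{ij}$ by $\lambda_i f_{ij}\lambda_j^{-1}$, i.e.\ as an isomorphism of deformations. Your write-up is in fact slightly more explicit than the paper's about the inverse construction and about naturality in $A$, but the argument is the same.
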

	
	\begin{proof}
		Recall that all deformations of $P$ on an affine open set are trivial, as mentioned above.
		Fix $A \in \Art_\K$; by Lemma~\ref{lem.automorfismi} an element $f$ of $Z^1_{\Sad(\mathcal{U})} (A)$ is the data for every $U_{ij}$ of isomorphisms $f_{ij} \colon P|_{U_{ij}} \times \Spec A \to   P|_{U_{ij}} \times \Spec A$, which restrict to the identity  $P|_{U_{ij}}  \to P|_{U_{ij}} $ and such that $f_{ik} = f_{ij} f_{jk}$ for all $i,j,k$. 
		
		The last condition means that the $\{f_{ij}\}$ glue to obtain a principal $G$-bundle $P_A \to X \times \Spec A$ and isomorphisms $f_i \colon P_A|_{U_i \times \Spec A} \to P|_{U_i} \times \Spec A$ such that $f_{ij} = f_i f_j^{-1}$. Such isomorphisms coincide when restricted to 
		$\overline{f_i} \colon i^* (P_A|_{U_i \times \Spec A} ) \to P|_{U_i}$ and hence glue to an isomorphism of principal bundles $ i^*(P_A) \to P$. This means that an element of $Z^1_{\Sad(\mathcal{U})} (A)$ gives a locally trivial deformation of $P$ over $A \in \Art_\K$.
		
		An element of $\exp(\prod_i \Sad (U_i) \otimes \m_A)$ is again by Lemma~\ref{lem.automorfismi} the data, for every $U_i$, of automorphisms $\lambda_i \colon  P|_{U_{i}} \times \Spec A \to   P|_{U_{i}} \times \Spec A$ which restrict to the identity $P|_{U_{i}}  \to P|_{U_{i}} $.
		Two elements $f =\{f_{ij}\}, h= \{h_{ij}\}$ of $Z^1_{\Sad(\mathcal{U})} (A)$ are equivalent under the action of $\lambda \in \exp(\prod_i \Sad (U_i) \otimes \m_A)$ if and only if $ h_{ij} = \lambda_i f_{ij} \lambda_j^{-1}$ for all $i,j$. 
		\begin{center}
			\begin{tikzcd}
			P_A|_{U_i \times \Spec A} \arrow[r, "f_i"] \arrow[d, "\lambda"'] & P|_{U_i} \times \Spec A \arrow[d, "\lambda_i"] \\
			P_A'|_{U_i \times \Spec A} \arrow[r, "h_i"']                     & P|_{U_i} \times \Spec A                       
			\end{tikzcd}
		\end{center}
		This can be expressed as $h_i^{-1} \lambda_i f_i =h_j^{-1} \lambda_jf_j$, which means that the $\{\lambda_i\}$ glue to a bundle isomorphism $\lambda \colon P_A \to P'_A$, where $P_A$ is the deformation corresponding to $\{f_{ij}\}$, and $P'_A$ to $\{h_{ij}\}$. Since each $\lambda_i$ restricts to the identity on $P|_{U_i}$, $\lambda$ is an isomorphism of deformations. 
	\end{proof}

\begin{corollary}\label{cor.dgladef}
	If $\mathcal{U}=\{U_i\}$ is an affine open cover of $X$, there is an isomorphism 	$$\Def_P \cong \Def_{\Tot(\mathcal{U}, \Sad)},$$ i.e., 
	the DG-Lie algebra $\Tot(\mathcal{U}, \Sad  )$ controls the deformations of $P$.
\end{corollary}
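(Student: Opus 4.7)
The plan is to chain together the two naturality results that precede the statement: Proposition~\ref{prop.deformazioni}, which identifies the deformation functor of $P$ with the non-abelian cohomology functor $H^1_{\Sad(\mathcal{U})}$ of the \v{C}ech semicosimplicial Lie algebra, and Proposition~\ref{prop.iso-funtori}, which identifies $H^1_\mathfrak{h}$ with $\Def_{\Tot(\mathfrak{h})}$ for any semicosimplicial Lie algebra $\mathfrak{h}$. Specialising the latter to $\mathfrak{h} = \Sad(\mathcal{U})$ and composing with the former immediately yields the natural isomorphism
\[
\Def_P \;\xrightarrow{\ \sim\ }\; H^1_{\Sad(\mathcal{U})} \;\xrightarrow{\ \sim\ }\; \Def_{\Tot(\mathcal{U}, \Sad)}.
\]

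There is essentially no obstacle here since both isomorphisms have already been established in the paper; one only needs to verify that the hypothesis of Proposition~\ref{prop.iso-funtori} is met, namely that $\Sad(\mathcal{U})$ really is a semicosimplicial Lie algebra (which is immediate, as $\Sad$ is a sheaf of Lie algebras and \v{C}ech restrictions are Lie algebra homomorphisms) and that $\Tot$ applied to it coincides with $\Tot(\mathcal{U}, \Sad)$, which is just unpacking the definition of the totalisation given in Definition~\ref{def.tot} combined with the construction of the \v{C}ech semicosimplicial object recalled in Example~\ref{ex.cech}. The affineness of $\mathcal{U}$ is used only insofar as it guarantees that the totalisation is a good model for derived global sections, which is not strictly needed for the formal identity of functors but matches the setting of Proposition~\ref{prop.deformazioni}.

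Hence the corollary is purely formal and the proof reduces to writing the composed isomorphism explicitly, observing its naturality in $A \in \Art_\K$, and concluding that $\Tot(\mathcal{U}, \Sad)$ controls the deformations of $P$ in the standard sense.
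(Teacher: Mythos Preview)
Your proposal is correct and follows exactly the paper's own proof, which simply cites Propositions~\ref{prop.deformazioni} and~\ref{prop.iso-funtori} and composes the two natural isomorphisms. One small quibble: your remark that affineness is needed only so that $\Tot$ models derived global sections is slightly off---the affineness hypothesis is essential already in Proposition~\ref{prop.deformazioni}, where it guarantees that deformations of $P$ are trivial over each $U_i$ (via vanishing of $H^1(U_i,\Sad)$), which is what makes the \v{C}ech gluing argument go through; but this does not affect the validity of your argument since you invoke that proposition with its hypotheses intact.
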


\begin{proof}
	Consequence of  Propositions~\ref{prop.deformazioni} and \ref{prop.iso-funtori} .
\end{proof}

\medspace

	We now specialise the $L_\infty$ morphism of Section~\ref{sec.cyclic} to the Atiyah Lie algebroid of the principal $G$-bundle $P$. 
	
		A Lie algebroid is DG-Lie algebroid (Definition~\ref{def.DGLiealg}) concentrated in degree zero.
	Consider the \textbf{Atiyah Lie algebroid} of the principal bundle $P$ introduced in \cite{At},  which is a Lie algebroid structure on the sheaf $\sQ$ of sections of the vector bundle $Q= T_P/G$, the quotient of the tangent bundle of the total space $T_P$ by the canonical induced $G$-action.
	There is a canonical short exact sequence of locally free sheaves over $X$
	\begin{equation}\label{eq.atiyahLiealg}
	\begin{tikzcd}
	0 \arrow[r] & \Sad  \arrow[r] & \sQ \arrow[r, "\rho"] & \Theta_X \arrow[r] & 0,
	\end{tikzcd}
	\end{equation}
	where $\Sad$ denotes the sheaf of sections of the adjoint bundle $\ad P = P \times^G \g$  and $\rho \colon \sQ \to \Theta_X$ is the anchor map. The vector bundle $Q$ is the bundle of invariant tangent vector fields on $P$, and the Lie bracket on $\sQ$ is induced by the Lie bracket of vector fields. 
	
	\begin{definition}\cite{At}\label{def.connAT}
		A {connection} on the principal bundle $P \to X$ is a splitting of the exact sequence in \eqref{eq.atiyahLiealg}.
	\end{definition}
	It is clear that a connection on $P$ does not always exist.
	
	Let $\Omega_X^1$ denote   the cotangent sheaf, and $\Omega_X^1[-1]$ the cotangent sheaf considered as a trivial complex of sheaves concentrated in degree one.
	As in Section~\ref{sec.DGLiealg}, one can tensor the short exact sequence  \eqref{eq.atiyahLiealg}
 with $\Omega^1_X[-1]$ to obtain a short exact sequence of complexes of sheaves
	\begin{equation*}
	\begin{tikzcd}
	0 \arrow[r] &   \Omega_X^1[-1]\otimes \Sad \arrow[r] &   \Omega_X^1[-1]\otimes \sQ\arrow[r, "\Id\otimes \rho"] & \Omega_X^1[-1]\otimes  \Theta_X \arrow[r] & 0.
	\end{tikzcd}
	\end{equation*}

	Fix an affine open cover $\sU = \{ U_i\}$ of $X$;
	as in Section~\ref{sec.DGLiealg}  the short exact sequence above
	induces a short exact sequence of DG-vector spaces
	\begin{center}
		\begin{tikzcd}[column sep=small]
		0 \arrow[r] & \Tot(\mathcal{U},  \Omega_X^1[-1]\otimes\Sad ) \arrow[r] & \Tot(\mathcal{U},\Omega_X^1[-1]\otimes  \sQ ) \arrow[r, "\Id\otimes \rho"] & \Tot(\mathcal{U}, \Omega_X^1[-1]\otimes \Theta_X) \arrow[r] & 0,
		\end{tikzcd}
	\end{center}
	and we denote by $d_{\Tot}$ the differentials of the above complexes.
	
	 It is easily seen that a lifting  of the identity $\Id_{\Omega^1}\in \Gamma(X,\Omega_X^1[-1] \otimes \Theta_X)$ to $D \in \Gamma(X,\Omega_X^1[-1]\otimes \sQ)$ is equivalent to
	a splitting of the exact sequence in \eqref{eq.atiyahLiealg}.
	Hence in the case of a principal bundle $P$,  a lifting of the identity can be identified with a connection on $P$. Therefore we call  a preimage of $\Id_{\Omega^1}$ in $  \Omega_X^1[-1]\otimes \sQ$ a germ of a connection on $P$, and we use the following terminology:
	\begin{definition}
		A \textbf{simplicial {connection}} on the principal bundle $P$ is a lifting $D$ in \\$\Tot(\mathcal{U},   \Omega_X^1[-1]\otimes \sQ)$ of the identity $ \Id_{\Omega^1} $ in  $\Tot(\mathcal{U}, \Omega_X^1[-1]\otimes \Theta_X)$.
	\end{definition}
	
	\begin{definition}
		The \textbf{{Atiyah} cocycle} of $P$ is $u= d_{\Tot}D \in \Tot (\mathcal{U}, \Omega_X^1[-1]\otimes \Sad )$.
	\end{definition}

It is natural to use the name Atiyah cocycle instead of extension cocycle of Definition~\ref{def.Atiyah-cocycle}, because its cohomology class is equal to the extension class of the short exact sequence in \eqref{eq.atiyahLiealg}, hence it vanishes if and only if there exists a connection on $P$.

	As in Definition~\ref{def.connessione}, given a simplicial {connection} $D \in  \Tot(\mathcal{U},  \Omega_X^1[-1]\otimes\sQ  )$ it is possibile to define an adjoint operator 
	\[ \nabla =[D, -]\colon  \Tot(\mathcal{U}, \Sad  ) \to \Tot(\mathcal{U},   \Omega_X^1[-1]\otimes\Sad ).\] 
	
 A cyclic form on the Atiyah Lie algebroid $\mathcal{Q}$ is a symmetric bilinear form $\la -,- \ra \colon \Sad \times \Sad \to \Oh_X$ such that for all $x,y \in \Sad$ and $q \in\mathcal{Q}$,
 \[ \la [q,x], y \ra + \la x, [q, y] \ra = \rho (q) (\la x, y \ra),\]
 where $\rho \colon \mathcal{Q} \to \Theta_X$ is the anchor map of the Atiyah Lie algebroid $\mathcal{Q}$.
 
 \begin{example}
 	The cyclic form induced by the adjoint representation of a DG-Lie algebroid of Example~\ref{ex.ad fasci} in this case can be constructed in an equivalent way, starting from the Killing form of the Lie algebra $\g$
 of the group $G$: $$K \colon \g \otimes_\K \g \to \K, \quad K (g,h) = \Tr (\ad g \ad h).$$
 Take $x,y$ in $\Sad(U)$ and let $U= \bigcup_{i} U_i$ with $U_i$ open sets trivialising the principal bundle $P$,
 then $$x = \{x_i \colon U_i \to \g \ | \ x_i (p)= \Ad_{g_{ij}(p)} x_j(p) \ \ \forall p \in U_{ij}\},$$ and analogously for $y$. Define $\la x, y \ra $ as $\{\la x_i, y_i \ra \colon U_i \to \K \}$, where for $p \in U_i$, $$\la x_i, y_i \ra (p) = K (x_i(p), y_i (p)) .$$ This is well defined because the Killing form is invariant under automorphisms of the Lie algebra $\g$, so that for $p \in U_{ij}$
 \[ K (x_i(p), y_i(p)) = K(\Ad_{g_{ij}(p)} x_j(p), \Ad_{g_{ij}(p)} y_j(p))= K (x_j(p), y_j(p)).\]
 \end{example}

	\medspace
	
	Recall that $\Tot$ preserves multiplicative structures, therefore $\Tot(\mathcal{U}, \Sad  )$ is a DG-Lie algebra.  In the sequel, $\Tot(\sU, \Omega_X^{\leq 1}[2])= \Tot (\mathcal{U}, \Oh_X[2] \xrightarrow{\dr} \Omega_X^1[1] )$ is considered as a DG-Lie algebra with trivial bracket; its differential is denoted  $d_{\Tot } + \dr$.
	 Theorem~\ref{teo.linfinitoalg} then yields the following.
	\begin{corollary}\label{cor.linf per adP}
		For every simplicial {connection} $D$ on a  principal bundle $P$ on a  smooth separated scheme $X$ of finite type over an algebraically closed field $\K$ of characteristic zero, endowed with a $d_{\Tot}$-closed cyclic form $\la-,-\ra \colon \Tot(\mathcal{U}, \Omega^i_X[-i] \otimes \Sad) \times \Tot(\mathcal{U}, \Omega^j_X[-j] \otimes \Sad) \to \Tot(\mathcal{U}, \Omega^{i+j}_X[-i-j] )$, $i,j \geq 0$, there exists an $L_\infty$ morphism of DG-Lie algebras on the field $\K$
		\[ f \colon \Tot (\mathcal{U}, \Sad) \rightsquigarrow \Tot(\sU, \Omega_X^{\leq 1}[2]),\]
		with components 
		\begin{align*}
		f_1 (x) &= \langle u, x \rangle,\\
		f_2 (x,y) &= \frac{1}{2} \big( \la \nabla(x), y \ra - (-1)^{\overline{x}\ \overline{y}} \la \nabla (y), x \ra \big),\\
		f_3(x,y,z)  &=-\frac{1}{2} \la x, [y,z] \ra,\\
		f_n &= 0 \ \forall n \geq 4.
		\end{align*}
	\end{corollary}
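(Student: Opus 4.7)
The plan is to show that this corollary is an immediate specialisation of Theorem~\ref{teo.linfinitoalg} to the case where the transitive DG-Lie algebroid is the Atiyah Lie algebroid $\mathcal{Q}$ of the principal bundle $P$. To do this, I first need to verify that the hypotheses of Theorem~\ref{teo.linfinitoalg} are satisfied in this setting. A Lie algebroid, being by convention a DG-Lie algebroid concentrated in degree zero, is automatically a DG-Lie algebroid; the short exact sequence \eqref{eq.atiyahLiealg} exhibits $\mathcal{Q}$ as transitive with $\Ker \rho = \Sad$, which is the sheaf of sections of a vector bundle on $X$ and hence a locally free sheaf concentrated in degree zero, trivially a finite complex of locally free sheaves.

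Next I would identify the objects appearing in Theorem~\ref{teo.linfinitoalg} with those appearing in the statement of the corollary. A simplicial connection $D \in \Tot(\mathcal{U}, \Omega_X^1[-1] \otimes \sQ)$ on $P$ is by definition a simplicial lifting of the identity for the DG-Lie algebroid $\mathcal{Q}$ in the sense of Definition~\ref{def.conn-simpl}. The associated operator $\nabla = [D,-] \colon \Tot(\mathcal{U}, \Sad) \to \Tot(\mathcal{U}, \Omega_X^1[-1]\otimes \Sad)$ is the connection of Definition~\ref{def.connessione}, and the Atiyah cocycle $u = d_{\Tot}D \in \Tot(\mathcal{U}, \Omega_X^1[-1]\otimes \Sad)$ is the extension cocycle of Definition~\ref{def.Atiyah-cocycle}. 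The cyclic form assumed in the hypothesis of the corollary is precisely of the type required by Theorem~\ref{teo.linfinitoalg}.

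Having matched all the data, I can then apply Theorem~\ref{teo.linfinitoalg} directly: it produces an $L_\infty$ morphism of DG-Lie algebras over $\mathbb{K}$
\[ f \colon \Tot(\mathcal{U}, \Sad) \rightsquigarrow \Tot(\mathcal{U}, \Omega_X^{\leq 1}[2]) \]
whose components $f_1, f_2, f_3$ are given by exactly the formulas in the statement and whose higher components vanish. There is essentially no obstacle to overcome here, since the work has already been done in the proof of Theorem~\ref{teo.linfinitoalg}; the only content of the proof is the unpacking of the definitions showing that the Atiyah Lie algebroid setup fits the hypotheses, which I have outlined above. The proof therefore reduces to the single line ``apply Theorem~\ref{teo.linfinitoalg} to $(\mathcal{A}, \rho) = (\sQ, \rho)$''.
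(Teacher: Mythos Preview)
Your proposal is correct and matches the paper's approach exactly: the paper simply states that ``Theorem~\ref{teo.linfinitoalg} then yields the following'' before the corollary, with no further proof. Your unpacking of why the Atiyah Lie algebroid $(\sQ,\rho)$ satisfies the hypotheses of Theorem~\ref{teo.linfinitoalg} is precisely the verification the paper leaves implicit.
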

	
	As seen in Remark~\ref{rem.primacomp}, the linear component $f_1$ of the $L_\infty$ morphism induces a map of graded Lie algebras 
	\[ f_1 \colon H^*(\Tot(\mathcal{U}, \Sad)) \to H^* (\Tot(\sU, \Omega_X^{\leq 1}[2])),\]
	which, since the open cover $\mathcal{U}$ is affine, becomes
	\[ f_1 \colon H^* (X, \Sad) \to \mathbb{H}^* (X, \Omega_X^{\leq 1}[2]).\]

	\begin{corollary}\label{cor.principali}
	Let $P$ be a principal bundle on a smooth separated scheme $X$ of finite type over an algebraically closed field $\K$ of characteristic zero and let $$\la-,-\ra \colon \Tot(\mathcal{U}, \Omega^i_X[-i] \otimes \Sad) \times \Tot(\mathcal{U}, \Omega^j_X[-j] \otimes \Sad) \to \Tot(\mathcal{U}, \Omega^{i+j}_X[-i-j] ),\quad i,j \geq 0,$$ be a $d_{\Tot}$-closed cyclic form.
	Then every obstruction to the deformations of $P$ belongs to the kernel of the map
	\begin{equation*}
	f_1 \colon H^2 (X, \Sad) \to \mathbb{H}^2 (X, \Omega_X^{\leq 1}[2]), \quad f_1(x)= \la \At(P), x \ra,
	\end{equation*}
	where $\At(P)$ denotes the Atiyah class of the principal bundle $P$.
	\end{corollary}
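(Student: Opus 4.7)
The plan is to mirror the argument given for Corollary~\ref{cor.fasci2}, transporting it from the DG-Lie algebroid of derivations of pairs to the Atiyah Lie algebroid $\sQ$ of $P$. First I would fix an affine open cover $\mathcal{U}$ of $X$ and choose a simplicial connection $D \in \Tot(\mathcal{U}, \Omega_X^1[-1] \otimes \sQ)$, which exists because on each affine piece the anchor map is surjective on sections. By Corollary~\ref{cor.dgladef} the DG-Lie algebra $\Tot(\mathcal{U}, \Sad)$ controls $\Def_P$, so in particular $H^2(X, \Sad) \cong H^2(\Tot(\mathcal{U}, \Sad))$ is an obstruction space for the deformations of $P$.

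Next, applying Corollary~\ref{cor.linf per adP} to $D$ together with the given $d_{\Tot}$-closed cyclic form, I would obtain an $L_\infty$ morphism
\[ f \colon \Tot(\mathcal{U}, \Sad) \rightsquigarrow \Tot(\mathcal{U}, \Omega_X^{\leq 1}[2]) \]
whose linear component is $f_1(x) = \langle u, x \rangle$, where $u = d_{\Tot} D$ is the Atiyah cocycle. Since $\Omega_X^{\leq 1}[2]$ has trivial bracket, the target is an abelian DG-Lie algebra and the associated deformation functor admits no nontrivial obstructions. As recalled just before Corollary~\ref{cor.fasci2}, an $L_\infty$ morphism between DG-Lie algebras induces a morphism of deformation functors whose $H^2$-component commutes with obstruction maps; consequently every obstruction to the deformations of $P$ lies in the kernel of the induced map
\[ f_1 \colon H^2(X, \Sad) \to \mathbb{H}^2(X, \Omega_X^{\leq 1}[2]). \]

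The only delicate point, and the step I would expect to be the main obstacle, is identifying the cochain formula $x \mapsto \langle u, x \rangle$ with the cohomological formula $x \mapsto \langle \At(P), x \rangle$. For this I would invoke the discussion following Definition~\ref{def.Atiyah-cocycle}: the cohomology class of $u$ in $\Ext_X^1(\Theta_X, \Sad)$ is independent of the chosen simplicial connection $D$ and coincides with the extension class of the short exact sequence \eqref{eq.atiyahLiealg}, which is by definition the Atiyah class $\At(P)$ of the principal bundle. Since the cyclic form descends to a well-defined bilinear pairing on (hyper)cohomology, the substitution $[u] = \At(P)$ in the formula for $f_1$ yields the stated expression and concludes the proof.
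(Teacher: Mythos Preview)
Your proposal is correct and follows essentially the same route as the paper's own proof: invoke Corollary~\ref{cor.dgladef} to identify $\Tot(\mathcal{U},\Sad)$ as a controlling DG-Lie algebra with obstruction space $H^2(X,\Sad)$, apply Corollary~\ref{cor.linf per adP} to get the $L_\infty$ morphism into the abelian DG-Lie algebra $\Tot(\mathcal{U},\Omega_X^{\le 1}[2])$, and conclude that obstructions lie in the kernel of the induced $f_1$. Your added paragraph making explicit the identification $[u]=\At(P)$ via the discussion after Definition~\ref{def.Atiyah-cocycle} is a welcome clarification that the paper leaves implicit.
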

\begin{proof}
	The proof is analogous to the one of Corollary~\ref{cor.fasci2}: the linear component of the $L_\infty$ morphism of DG-Lie algebras of Corollary~\ref{cor.linf per adP}  induces a morphism in cohomology which commutes with obstruction maps of the associated deformation functors, and the deformation functor associated to an abelian DG-Lie algebra has trivial obstructions. 
	By Corollary~\ref{cor.dgladef}, 	
	if $\mathcal{U}= \{ U_i\}$ is an affine open cover of $X$,  the DG-Lie algebra $\Tot(\mathcal{U}, \Sad)$ controls the deformations of $P$ and
	 an obstruction space is $H^2(\Tot(\mathcal{U}, \Sad)) \cong H^2 (X, \Sad)$. Since the DG-Lie algebra $\Tot(\sU, \Omega_X^{\leq 1}[2])$ is abelian, we obtain that $f_1$ annihilates all obstructions.
	
\end{proof}	 
	 
	 \begin{ackno} I thank my supervisor Marco Manetti for the help during the preparation of this paper. 	 \end{ackno}

\end{document}